\documentclass[12pt]{amsart}
\usepackage{amsmath,amssymb,amsthm,upref,graphicx,mathrsfs}


\usepackage{color}
\usepackage[
  colorlinks=true,
  linkcolor=blue,
  citecolor=blue,
  urlcolor=blue]{hyperref}

\numberwithin{equation}{section}


\textwidth16.5cm
\textheight21cm
\addtolength{\topmargin}{-.4cm}
\addtolength{\oddsidemargin}{-1.9cm}
\setlength{\evensidemargin}{\oddsidemargin}
\addtolength{\headheight}{3.2pt}


\newtheorem{theorem}{Theorem}[section]

\newtheorem{lemma}[theorem]{Lemma}

\theoremstyle{definition}
\newtheorem{definition}[theorem]{Definition}

\newtheorem{remark}[theorem]{Remark}


\newcommand{\be}{\begin{eqnarray*}}
\newcommand{\ee}{\end{eqnarray*}}
\newcommand{\beq}{\begin{equation}}
\newcommand{\eeq}{\end{equation}}

\begin{document}

\title[\bf Martingale Inequalities in Variable Exponent
Hardy spaces]
  {\bf Martingale Inequalities in Variable Exponent
Hardy spaces with $0<p^-\leq p^+<\infty$}

\authors

\author[P. D. Liu]{Peide Liu}
\address{Peide Liu \\ School of Mathematics and Statistics,
Wuhan University, 430072 Wuhan, China}
\email{pdliu@whu.edu.cn}

\author[W. Chen]{Wei Chen}
\address{Wei Chen \\ School of Mathematical Sciences,
Yangzhou University, 225002 Yangzhou, China}
\email{weichen@yzu.edu.cn}

\makeatletter
\renewcommand{\@makefntext}[1]{#1}
\makeatother \footnotetext{\noindent
Peide Liu is supported by the National Natural
Science Foundation of China(11001273).
Wei Chen is supported by the National Natural Science Foundation of China(11101353) and
the Natural Science Foundation of Jiangsu Province(BK20161326).}

\keywords{variable exponent Lebesgue space,
Hardy martingale space, martingale inequality, atomic decomposition.}
\subjclass[2010]{Primary 60G46; Secondary 60G42}

%
%
\begin{abstract} We investigate the properties of the
variable Lebesgue spaces with quasi-norm on a probability space,
and give the atomic decompositions suited to the variable exponent
martingale Hardy spaces. Using the decompositions and the harmonic
mean of a variable exponent, we obtain several continuous embedding
relations between martingale Hardy spaces with small exponent.
Finally, we extend these results to the cases $0<p^-\leq
p^+<\infty.$
\end{abstract}

\maketitle

%
\section{Introduction}

Variable Lebesgue spaces $L^{p(\cdot)}(\mathbb{R}^n)$ is defined as
the set of all measurable functions $f$ such that for some
$\lambda>0$ $$\int_{\mathbb{R}^n}\left(\frac{|f(x)|}{\lambda
}\right)^{p(x)}dx<\infty,$$ where $p(\cdot): \mathbb{R}^n\rightarrow
(0,\infty)$ is a measurable function. These spaces were introduced
by Orlicz \cite{Orlicz} in 1931. The variable Lebesgue spaces, as
their name implies, are a generalization of the classical Lebesgue
spaces, replacing the constant exponent $p$ with a variable exponent
function $p(\cdot)$. The $L^{p(\cdot)}(\mathbb{R}^n)$ spaces have
many properties similar to the classical $L^p(\mathbb{R}^n)$ spaces,
but they also differ from each other in surprising and subtle ways.
For this reason the variable Lebesgue spaces have an intrinsic
interest. In addition, they are also very important for their
applications to PDEs, variational integrals with nonstandard local
growth conditions, non-Newtonian fluids and image restoration. In
the past few years the subject of variable exponent spaces has
undergone a vast development (see \cite{Cruz-Uribe01, Diening3} for
the history and references). Recently, the theory of variable
Lebesgue spaces was extended to that of variable Hardy spaces. The
variable Hardy spaces had been developed independently by Nakai and
Sawano \cite{Nakai1} and Cruz-Uribe and Daniel Wang \cite{cruz2}.
They defined atomic decompositions and proved the equivalent
definitions in terms of maximal operators. Then they gave that
singular integrals are bounded on variable Hardy spaces.

As is well known, martingale space theory is very closely related to
harmonic analysis and functional space theory. Over the past few
years, some authors have paid attention to variable exponent
martingale spaces and variable exponent martingale Hardy spaces. In
particular, Nakai and Sadasue \cite{Nakai} studied the boundedness
of Doob's maximal operator on variable exponent martingale spaces.
Jiao, Zhou, Hao and Chen \cite{Jiao} investigated the atomic
decompositions and John-Nirenberg inequalities for variable exponent
martingale Hardy spaces. In \cite{Liu1}, the famous
Burkholder-Gundy-Davis inequality for martingales and some
continuous embedding relations between martingale spaces in
classical martingale theory were extended to variable exponent Hardy
spaces. We mention that variable exponent martingale (Hardy) spaces
are very different from classical martingale spaces and function
spaces. For example, it is clear that the Jensen's inequality for
the conditional expectation is invalid. Moreover, the log-H\"{o}lder
continuity (see, e.g. \cite{Cruz-Uribe01, Diening3}) is also
invalid, because the probability space $\Omega$ has no natural
metric structure and linear structure. Thus, it is difficult but
interesting to study variable exponent martingale (Hardy) spaces.

The aim of this paper is to deal with variable exponent martingale
(Hardy) spaces. In classical martingale theory,
Burkholder-Gundy-Davis inequality holds only for any $1\leq
p<\infty,$ but some other inequalities hold for all $0<p<\infty,$ such as several inequalities for the martingales with
predictable controls (see \cite{Long, Weisz1} for more information).
Our goal is to extend the latter to the case $0<p^-\leq p^+<\infty$
and our approaches are mainly based on the atomic decompositions
suited to the variable exponent martingale Hardy spaces.

The article is organized as follows. In section \ref{sec1}, we
investigate some basic properties of variable exponent Lebegue space
$L^{p(\cdot)}$ with exponent $0<p^-\leq p^+<\infty.$ In section \ref{sec2}, we establish several
atomic decomposition theorems for the martingale Hardy spaces
$H^s_{p(\cdot)}, \mathcal{D}_{p(\cdot)}$ and
$\mathcal{Q}_{p(\cdot)}.$ In section \ref{sec3}, we discuss some properties
of harmonic mean of a variable exponent. In section \ref{sec005}, using the atomic
decompositions and the harmonic mean, we obtain several martingale
inequalities and continuous embedding relations between the spaces
with small exponents. In the last section, we extend the theorems in section \ref{sec005} to
the cases $0<p^-\leq p^+<\infty.$

Throughout this paper, $Z$ denotes the integer set. We denote by $C$
the absolute positive constant, which can vary from line to line,
and denote by $C_{p(\cdot)}$ the constant depending only on
$p(\cdot).$ The symbol $A\lesssim B$ stands for the inequality $A
\leq C B$ or $A \leq C_{p(\cdot)} B$.  If we write $A\sim B$, then
it stands for $A\lesssim B\lesssim A$.

\section{On variable Lebesgue spaces
 with quasi-norm}\label{sec1}

Let $(\Omega,\Sigma, \mu)$ be a complete probability space. We denote by
$L^0(\Omega)$ the set of all measurable functions on
$\Omega$ and $L^0_+(\Omega)$ the set of all positive members in
$L^0(\Omega).$ For $p\in L^0_+(\Omega),$ we call $p$ a variable
exponent. Let $p^-=\mathop{\hbox{ess inf}}_{\omega \in \Omega}~ p(\omega)$
and $p^+=\mathop{\hbox{ess sup}}_{\omega \in \Omega} ~p(\omega).$ If
$0<p^-\leq p^+<\infty,$ we say $p\in \mathcal{P}.$
For a variable
exponent $p,$ we define the modular of $u \in L^0(\Omega)$ by
\begin{equation}
\label{eq1.1} \rho_{p(\cdot)}(u) = E(|u|^p\chi_{\{p<\infty\}})+ \|u\chi_{\{p=\infty\}}\|_\infty,
\end{equation}
where $E$ is the expectation with respect to $\Sigma.$ Then, we denote the
variable exponent Lebesgue space by
$$L^{p(\cdot)}=\{u\in L^0(\Omega): \exists \gamma>0, \rho_{p(\cdot)}(\gamma u)
<\infty\}$$ with (quasi-)norm
\begin{equation}
\label{eq1.2} \|u\|_{p(\cdot)}=\inf \{\gamma>0: \rho_{p(\cdot)}(\frac{u}{\gamma})\leq1\}, ~~\forall u\in L^{p(\cdot) }.
\end{equation}

In the rest of this section we state some basic properties of
$L^{p(\cdot)}$ (see \cite{cruz2, Cruz-Uribe01, Diening3}). For
convenience, we give their proofs.

\begin{lemma}\label{Lem1.1} Let $p\in \mathcal{P}$ and $u\in L^{p(\cdot)}.$
\begin{enumerate}
  \item \label{Lem1.1a}$\rho_{p(\cdot)}(\lambda u)$ is continuous with respect to $\lambda$ on $(0, \infty);$
  \item \label{Lem1.1b}$\rho_{p(\cdot)}(u)<1 ~(=1, >1) $ iff $ \|u\|_{p(\cdot)}<1~(=1,>1);$
  \item \label{Lem1.1c}if $ \|u\|_{p(\cdot)}\leq1, $ then $\|u\|^{p^+}_{p(\cdot)}\leq \rho_{p(\cdot)}(u)
                       \leq\|u\|^{p^-}_{p(\cdot)};$
  \item \label{Lem1.1d}if $ \|u\|_{p(\cdot)}\geq1, $
                       then $\|u\|^{p^+}_{p(\cdot)}\geq \rho_{p(\cdot)}(u)\geq\|u\|^{p^-}_{p(\cdot)}.$
\end{enumerate}
\end{lemma}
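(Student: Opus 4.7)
The plan hinges on two observations. First, $p\in\mathcal{P}$ forces $p^+<\infty$, so $\{p=\infty\}$ has measure zero and the modular simplifies to $\rho_{p(\cdot)}(u)=E(|u|^p)$. Second, all four items will follow from continuity and strict monotonicity of $\lambda\mapsto\rho_{p(\cdot)}(\lambda u)$ on $(0,\infty)$ together with the pivotal identity $\rho_{p(\cdot)}(u/\|u\|_{p(\cdot)})=1$ for $u\ne 0$.

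For part (1), I would apply dominated convergence. Since $u\in L^{p(\cdot)}$, choose $\gamma_0>0$ with $E(|\gamma_0 u|^p)<\infty$. For $\lambda$ in a bounded interval $[a,b]\subset(0,\infty)$ around any fixed $\lambda_0$, writing $|\lambda u|^{p(\omega)}=(\lambda/\gamma_0)^{p(\omega)}|\gamma_0 u(\omega)|^{p(\omega)}$ and estimating $(\lambda/\gamma_0)^{p(\omega)}\le (b/\gamma_0)^{p^+}+(b/\gamma_0)^{p^-}$ yields a uniform integrable majorant, so pointwise convergence of $|\lambda u|^{p(\omega)}$ in $\lambda$ gives continuity of $\rho_{p(\cdot)}(\lambda u)$.

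For part (2), note that for $u\ne 0$, $f(\gamma):=\rho_{p(\cdot)}(u/\gamma)$ is continuous and strictly decreasing on $(0,\infty)$, with $f(0^+)=\infty$ (by Fatou, using $p^->0$) and $f(\infty)=0$. Hence $\{\gamma>0:f(\gamma)\le 1\}=[\alpha,\infty)$ with $\alpha=\|u\|_{p(\cdot)}$, and $f(\alpha)=1$. Applying strict monotonicity of $\lambda\mapsto\rho_{p(\cdot)}(\lambda u)$ to compare $\rho_{p(\cdot)}(u)=\rho_{p(\cdot)}(1\cdot u)$ with $\rho_{p(\cdot)}((1/\alpha)u)=1$ then translates each of $\alpha<1$, $\alpha=1$, $\alpha>1$ into $\rho_{p(\cdot)}(u)<1$, $=1$, $>1$ respectively; the case $u=0$ is trivial.

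For parts (3) and (4), I would start from $\rho_{p(\cdot)}(u/\alpha)=1$ and split on the position of $\alpha$ relative to $1$. When $\alpha\le 1$ one has $1/\alpha\ge 1$ and therefore $(1/\alpha)^{p^-}\le (1/\alpha)^{p(\omega)}\le (1/\alpha)^{p^+}$; multiplying by $|u|^{p(\omega)}$ and integrating produces $\alpha^{p^+}\le \rho_{p(\cdot)}(u)\le \alpha^{p^-}$. When $\alpha\ge 1$, $1/\alpha\le 1$ reverses the exponent inequalities and yields part (4). The main obstacle is purely one of bookkeeping: correctly orienting $(1/\alpha)^{p(\omega)}$ against its $p^\pm$ extremes according to whether $1/\alpha\ge 1$ or $\le 1$, and dispatching $u=0$ throughout; once $\rho_{p(\cdot)}(u/\|u\|_{p(\cdot)})=1$ is in hand, the remaining chain of inequalities is mechanical.
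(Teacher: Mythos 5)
Your proposal is correct and follows essentially the same route as the paper: continuity of the modular in $\lambda$ via dominated convergence (the paper splits this into Levy's theorem for one side and dominated convergence for the other), the pivotal identity $\rho_{p(\cdot)}(u/\|u\|_{p(\cdot)})=1$ obtained from continuity and monotonicity, and then sandwiching $(1/\|u\|_{p(\cdot)})^{p(\omega)}$ between its $p^\pm$ powers to get (3) and (4). No substantive differences.
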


\begin{proof}[Proof of Lemma \ref{Lem1.1}] It is clear that the function $|\lambda u|^p$ is increasing
and continuous with respect to $\lambda.$ In view of Levy's theorem and Lebesgue's dominated convergence
theorem, we get
$\rho_{p(\cdot)}$'s left-continuity and right-continuity, respectively.
Then, we have \eqref{Lem1.1a}. By the definition of $\|u\|_{p(\cdot)}$ and \eqref{Lem1.1a}, we obtain \eqref{Lem1.1b}.

To prove \eqref{Lem1.1c}, let $0<\|u\|_{p(\cdot)}\leq1.$ Then we have
$$\frac {\rho(u)}{\|u\|^{p^-}_{p(\cdot)}}
=E(\frac{|u|^p}{\|u\|^{p^-}_{p(\cdot)}})
\leq E(\frac{u}{\|u\|_{p(\cdot)}})^p
\leq E(\frac{|u|^p}{\|u\|^{p^+}_{p(\cdot)}})
=\frac{\rho(u)}{\|u\|^{p^+}_{p(\cdot)}}.$$
It follows from \eqref{Lem1.1b} that
$E(\frac{u}{\|u\|_{p(\cdot)}})^p=\rho_{p(\cdot)}(\frac{u}{\|u\|_{p(\cdot)}})=1.$
Thus $\|u\|^{p^+}_{p(\cdot)}\leq \rho_{p(\cdot)}(u)\leq\|u\|^{p^-}_{p(\cdot)}$. Similarly, we have \eqref{Lem1.1d}.
\end{proof}

\begin{lemma}\label{Lem1.2}Let $p\in \mathcal{P}, u_n, u\in L^{p(\cdot)}.$
\begin{enumerate}
  \item \label{Lem1.2a}$\sup_n\|u_n\|_{p(\cdot)}<\infty$ iff $\sup_n\rho_{p(\cdot)}(u_n)<\infty$;
  \item \label{Lem1.2b}$\|u_n-u\|_{p(\cdot)}\rightarrow0$ iff $\rho_{p(\cdot)}(u_n-u)\rightarrow0$;
  \item \label{Lem1.2c}$L^{p(\cdot)}$ is a quasi-Banach space;
  \item \label{Lem1.2d}if $p^-\geq1,$ $L^{p(\cdot)}$ is a Banach space.
\end{enumerate}
\end{lemma}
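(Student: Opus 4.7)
The plan is to treat the four parts in sequence, with \eqref{Lem1.2a} and \eqref{Lem1.2b} following immediately from the norm-modular dictionary in Lemma \ref{Lem1.1}, part \eqref{Lem1.2c} requiring both a quasi-triangle estimate and a completeness argument via almost-sure convergence of a subsequence, and part \eqref{Lem1.2d} upgrading to a genuine triangle inequality by exploiting convexity of the modular when $p^-\geq 1$.

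For \eqref{Lem1.2a}, if $M=\sup_n\|u_n\|_{p(\cdot)}<\infty$ I would split each index according to whether $\|u_n\|_{p(\cdot)}\leq 1$ and apply Lemma \ref{Lem1.1}(c) or (d) to obtain $\rho_{p(\cdot)}(u_n)\leq\max(1,M^{p^+})$; the converse is symmetric. For \eqref{Lem1.2b}, once $\|u_n-u\|_{p(\cdot)}\leq 1$ we have $\rho_{p(\cdot)}(u_n-u)\leq\|u_n-u\|_{p(\cdot)}^{p^-}\to 0$, and conversely $\rho_{p(\cdot)}(u_n-u)\leq 1$ eventually yields $\|u_n-u\|_{p(\cdot)}\leq\rho_{p(\cdot)}(u_n-u)^{1/p^+}\to 0$.

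For \eqref{Lem1.2c}, absolute homogeneity and positivity of the quasi-norm are immediate from the definition and Lemma \ref{Lem1.1}(b). The quasi-triangle inequality I would prove by combining the pointwise bound $(a+b)^{p(\omega)}\leq 2^{p^+}(a^{p(\omega)}+b^{p(\omega)})$ with the normalization $\rho_{p(\cdot)}(u/\|u\|_{p(\cdot)})\leq 1$ to show, for $\lambda\geq 1$,
\[
\rho_{p(\cdot)}\!\left(\frac{u+v}{\lambda(\|u\|_{p(\cdot)}+\|v\|_{p(\cdot)})}\right) \leq \frac{2^{p^{+}+1}}{\lambda^{p^{-}}};
\]
choosing $\lambda=2^{(p^{+}+1)/p^{-}}$ and invoking Lemma \ref{Lem1.1}(b) gives $\|u+v\|_{p(\cdot)}\leq\lambda(\|u\|_{p(\cdot)}+\|v\|_{p(\cdot)})$. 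The hard part is completeness. Given a Cauchy sequence $\{u_n\}$ I would extract a subsequence with $\|u_{n_{k+1}}-u_{n_k}\|_{p(\cdot)}\leq 2^{-k}$, so that $\rho_{p(\cdot)}(u_{n_{k+1}}-u_{n_k})\leq 2^{-kp^{-}}$ by Lemma \ref{Lem1.1}(c); a Markov-type estimate then yields $\mu\{|u_{n_{k+1}}-u_{n_k}|^{p(\cdot)}>2^{-kp^{-}/2}\}\leq 2^{-kp^{-}/2}$, and Borel--Cantelli produces a full-measure set on which $|u_{n_{k+1}}-u_{n_k}|\leq 2^{-k\alpha}$ eventually, with $\alpha=p^{-}/(2p^{+})$. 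Thus $u_{n_k}$ converges almost surely to some $u$; applying Fatou to $\rho_{p(\cdot)}(u_n-u)\leq\liminf_k\rho_{p(\cdot)}(u_n-u_{n_k})$ and combining with \eqref{Lem1.2b} and the Cauchy hypothesis finishes the job.

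For \eqref{Lem1.2d}, once $p^{-}\geq 1$ the map $t\mapsto t^{p(\omega)}$ is pointwise convex, so $\rho_{p(\cdot)}$ is convex, and the standard Luxemburg computation
\[
\rho_{p(\cdot)}\!\left(\frac{u+v}{\|u\|_{p(\cdot)}+\|v\|_{p(\cdot)}}\right) \leq \frac{\|u\|_{p(\cdot)}\,\rho_{p(\cdot)}(u/\|u\|_{p(\cdot)})+\|v\|_{p(\cdot)}\,\rho_{p(\cdot)}(v/\|v\|_{p(\cdot)})}{\|u\|_{p(\cdot)}+\|v\|_{p(\cdot)}} \leq 1
\]
promotes the quasi-triangle inequality to the genuine triangle inequality, while completeness transfers verbatim from \eqref{Lem1.2c}. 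The main obstacle I expect is the Borel--Cantelli/exponent-juggling step in \eqref{Lem1.2c}; everything else reduces to a clean translation between modular and norm via Lemma \ref{Lem1.1}.
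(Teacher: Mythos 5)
Your proposal is correct and follows essentially the same route as the paper: parts \eqref{Lem1.2a} and \eqref{Lem1.2b} via the norm--modular dictionary of Lemma \ref{Lem1.1}, and the quasi-triangle inequality via the same pointwise bound $(a+b)^{p}\leq 2^{p^+}(a^{p}+b^{p})$ with a large constant $K$. The only difference is that you supply in full the two steps the paper elides --- the completeness argument (subsequence with $2^{-k}$ gaps, Markov plus Borel--Cantelli for a.e.\ convergence, Fatou) and the convexity argument for \eqref{Lem1.2d} --- which the paper dismisses as ``an easy modification of the standard one'' and a citation to \cite[Theorem 2.70]{Cruz-Uribe01}, respectively; your versions of both are the standard ones and are sound.
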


\begin{proof}[Proof of Lemma \ref{Lem1.2}] To prove \eqref{Lem1.2a}. Let $\sup_n\|u_n\|_{p(\cdot)}<\infty.$
In view of Lemma \ref{Lem1.1}\eqref{Lem1.1c} and
\eqref{Lem1.1d}, we have
$$\rho_{p(\cdot)}(u_n)\leq\left\{
  \begin{array}{ll}
    \|u_n\|^{p^-}_{p(\cdot)}, & \hbox{$\|u_n\|_{p(\cdot)}\leq 1$;} \\
    \|u_n\|^{p^+}_{p(\cdot)}, & \hbox{$\|u_n\|_{p(\cdot)}\geq 1$.}
  \end{array}
\right.$$
Then
$$\rho_{p(\cdot)}(u_n)\leq(\sup_n\|u_n\|_{p(\cdot)})^{p^-}+(\sup_n\|u_n\|_{p(\cdot)})^{p^+}<\infty.$$
Let $\rho_{p(\cdot)}(u_n)<\infty.$
In the view of Lemma \ref{Lem1.1}\eqref{Lem1.1c} and
\eqref{Lem1.1d}, we have
$$\|u_n\|_{p(\cdot)}\leq\left\{
  \begin{array}{ll}
    \rho_{p(\cdot)}(u_n)^{\frac{1}{{p^+}}}, & \hbox{$\|u_n\|_{p(\cdot)}\leq 1$;} \\
    \rho_{p(\cdot)}(u_n)^{\frac{1}{{p^-}}}, & \hbox{$\|u_n\|_{p(\cdot)}\geq 1$.}
  \end{array}
\right.$$
Then
$$\|u_n\|_{p(\cdot)}\leq(\sup_n\rho_{p(\cdot)}(u_n))^{\frac{1}{{p^+}}}+(\sup_n\rho_{p(\cdot)}(u_n))^{\frac{1}{{p^-}}}<\infty.$$

It is clear that \eqref{Lem1.2b} follows
directly from Lemma \ref{Lem1.1}\eqref{Lem1.1c}.

To check that $\|\cdot\|_{p(\cdot) }$ is a quasi-norm.
By the definition of $\|u\|_{p(\cdot)},$ we have $\|u\|_{p(\cdot)}\geq0$ and $\|u\|_{p(\cdot) }=0$ iff
$\rho_{p(\cdot)}(u)=0$ iff $u=0. $ Let $\alpha\neq0.$
It follows that
$$\|\alpha u\|_{p(\cdot)}= \inf \{\gamma >0: \rho(\frac{\alpha u}{\gamma})\leq1\}
= |\alpha|\inf \{\frac{\gamma}{|\alpha|}>0: \rho(\frac{\alpha u}{\gamma})\leq1\}
=|\alpha|\|u\|_{p(\cdot)}.$$
Now if $0<\|u\|_{p(\cdot)}< a, 0<\|v\|_{p(\cdot)}< b$ and $K\geq1,$ we deduce that
\begin{eqnarray*}
\rho_{p(\cdot)}(\frac{u+v}{K(a+b)})&=&E(\frac{(u+v)}{K(a+b)})^p
   \leq E(\frac{2}{K}\max \{|\frac{u}{a}|,|\frac{v}{b}|\})^p \\
&\leq& \frac{2^{p^+}}{K^{p^-}}( E|\frac{u}{a}|^p+E|
   \frac{v}{b}|^p) \leq  \frac{2^{{p^+}+1}}{K^{p^-}}\leq1,
\end{eqnarray*}
provided $K$ is large enough. Then $\|u+v\|_{p(\cdot)}\leq K(a+b).$ Finally we get
\begin{equation}
\label{eq1.3}\|u+v\|_{p(\cdot)}\leq K(\|u\|_{p(\cdot)}+\|v\|_{p(\cdot)}).
\end{equation}

The proof of the completeness of
$\|\cdot\|_{p(\cdot)}$ is an easy modification of
the standard one, and we omit it.

The proof of \eqref{Lem1.2d} is well known: see, for instance, \cite[Theorem 2.70]{Cruz-Uribe01}.
\end{proof}

\begin{lemma}\label{Lem1.3}
\begin{enumerate}
  \item \label{Lem1.3a} Let $p\in \mathcal{P}, u\in
L^{p(\cdot)}$ and $0<s<\infty.$ Then
\begin{equation}
\label{eq1.4}  \||u|^s\|_{p(\cdot)}=\|u\|^s_{sp(\cdot)}.
\end{equation}
  \item \label{Lem1.3b} If $r,p,q \in \mathcal{P}$ with
$\frac{1}{r}=\frac{1}{p}+\frac{1}{q},$ then there is $C=C_{p,q}>0$
such that
\begin{equation}
\label{eq1.5} \|uv\|_{r(\cdot)}\leq C\|u\|_{p(\cdot)}\|v\|_{q(\cdot)},~~ \forall u\in L^{p(\cdot)}, v\in L^{q(\cdot)}.
\end{equation}
\end{enumerate}
\end{lemma}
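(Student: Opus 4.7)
The plan for part \eqref{Lem1.3a} is to unwind the definition of the quasi-norm directly. Pointwise we have $(|u(\omega)|^{s}/\gamma)^{p(\omega)} = (|u(\omega)|/\gamma^{1/s})^{sp(\omega)}$, so $\rho_{p(\cdot)}(|u|^{s}/\gamma) = \rho_{sp(\cdot)}(u/\gamma^{1/s})$. Substituting $\beta = \gamma^{1/s}$ in the infimum defining $\||u|^{s}\|_{p(\cdot)}$ then turns the condition $\rho_{p(\cdot)}(|u|^{s}/\gamma) \leq 1$ into $\rho_{sp(\cdot)}(u/\beta) \leq 1$, and the infimum becomes $\inf\{\beta^{s} : \rho_{sp(\cdot)}(u/\beta) \leq 1\} = \|u\|_{sp(\cdot)}^{s}$, which is exactly \eqref{eq1.4}.

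For part \eqref{Lem1.3b}, the plan is to reduce the claim to a modular inequality via a pointwise Young-type bound, and then convert back to quasi-norms using Lemma \ref{Lem1.1}. The hypothesis $1/r = 1/p + 1/q$ rewrites as $r/p + r/q = 1$, so at each $\omega$ the exponents $p(\omega)/r(\omega)$ and $q(\omega)/r(\omega)$ are H\"{o}lder conjugates both $\geq 1$. Applying the scalar Young inequality to $|u|^{r}$ and $|v|^{r}$ yields the pointwise bound
\begin{equation*}
|uv|^{r} \leq \frac{r}{p}|u|^{p} + \frac{r}{q}|v|^{q} \leq |u|^{p} + |v|^{q},
\end{equation*}
and taking expectations gives $\rho_{r(\cdot)}(uv) \leq \rho_{p(\cdot)}(u) + \rho_{q(\cdot)}(v)$.

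To finish, I will normalize so that $\|u\|_{p(\cdot)} = \|v\|_{q(\cdot)} = 1$, handling the trivial cases $u=0$ or $v=0$ separately. By Lemma \ref{Lem1.1}\eqref{Lem1.1b} both modulars on the right are then at most $1$, so $\rho_{r(\cdot)}(uv) \leq 2$; Lemma \ref{Lem1.1}\eqref{Lem1.1d} then yields $\|uv\|_{r(\cdot)} \leq 2^{1/r^{-}}$. Un-normalizing produces \eqref{eq1.5} with $C = 2^{1/r^{-}}$, depending only on $p,q$ through $r^{-}$. Neither part presents a genuine obstacle; the only care needed is the infimum substitution in (a), which must capture the degree-$s$ homogeneity, and the final normalization/rescaling step in (b) that makes the constant $C$ explicit and dimensionally correct.
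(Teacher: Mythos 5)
Your proof of part \eqref{Lem1.3a} is correct and is essentially the paper's argument: both amount to the substitution $\beta=\gamma^{1/s}$ inside the infimum, using the pointwise identity $(|u|^{s}/\gamma)^{p}=(|u|/\gamma^{1/s})^{sp}$. For part \eqref{Lem1.3b} you take a genuinely different route. The paper also normalizes $\|u\|_{p(\cdot)}=\|v\|_{q(\cdot)}=1$, but then passes to $\||u|^{r}\|_{p(\cdot)/r(\cdot)}=\||v|^{r}\|_{q(\cdot)/r(\cdot)}=1$ and invokes the generalized H\"{o}lder inequality for variable exponent spaces (citing \cite[Lemma 2.2.26]{Cruz-Uribe01}) to bound $\rho_{r(\cdot)}(uv)=E(|u||v|)^{r}$ by a constant, after which Lemma \ref{Lem1.1} converts the modular bound to a norm bound. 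You instead derive the modular bound from scratch via the pointwise scalar Young inequality $|uv|^{r}\leq \frac{r}{p}|u|^{p}+\frac{r}{q}|v|^{q}\leq |u|^{p}+|v|^{q}$, which is legitimate since $p(\omega)/r(\omega)$ and $q(\omega)/r(\omega)$ are conjugate at each point, and then finish with Lemma \ref{Lem1.1}\eqref{Lem1.1b} and \eqref{Lem1.1d} exactly as the paper does. Your version is more self-contained (no appeal to an external H\"{o}lder inequality, which in any case is the same Young-inequality computation under the hood) and yields the explicit constant $C=2^{1/r^{-}}$, which is finite because $1/r\leq 1/p^{-}+1/q^{-}$ forces $r^{-}>0$; the paper's version delegates that work to the cited lemma and leaves $C$ implicit. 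Both are correct; the case analysis you note (whether $\|uv\|_{r(\cdot)}$ is above or below $1$ before applying Lemma \ref{Lem1.1}\eqref{Lem1.1d}) is the only point requiring care, and you handle it.
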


\begin{proof}[Proof of Lemma \ref{Lem1.3}]To prove \eqref{Lem1.3a}.
Since $p^-s>0,$ we have
\begin{eqnarray*}
\|u\|^s_{sp(\cdot)}& =& \inf \{\gamma^s>0: E|\frac{u}{\gamma}|^{sp}\leq1\} \\
&=& \inf \{\gamma^s>0: E(\frac{|u|^s}{\gamma^s})^p\leq1\}=
\||u|^s\|_{p(\cdot)}.
\end{eqnarray*}

To prove \eqref{Lem1.3b}. Without loss of generality, we suppose that
$\|u\|_{p(\cdot)}=\|v\|_{q(\cdot)}=1.$ Then we have
$$E(|u|^r)^{\frac{p}{r}}=E|u|^p=1, ~~~ E(|v|^r)^{\frac{q}{r}}=E|v|^q=1.$$
It follows that
$\||u|^r\|_{\frac{p(\cdot)}{r(\cdot)}}=\||v|^r\|_{\frac{q(\cdot)}{r(\cdot)}}=1.$
Since $\frac{p(\cdot)}{r(\cdot)}>1,
\frac{q(\cdot)}{r(\cdot)}>1$
and
$\frac{r(\cdot)}{p(\cdot)}+\frac{r(\cdot)}{q(\cdot)}=1,$ we get
$$\rho_{r(\cdot)}(uv)=E(|u||v|)^r \leq C \||u|^r\|_{\frac{p(\cdot)}{r(\cdot)}}\||v|^r\|_{\frac{q(\cdot)}{r(\cdot)}}=C$$
by H\"{o}lder's
inequality in variable exponent case(\cite[Lemma 2.2.26]{Cruz-Uribe01}).
In view of Lemma \ref{Lem1.1}\eqref{Lem1.1c}, we have $\|uv\|_{r(\cdot)}\leq
C=C\|u\|_{p(\cdot)}\|v\|_{q(\cdot)}.$
\end{proof}

\begin{lemma}\label{Lem1.4}  Let $p\in \mathcal{P}$ with $p^+\leq1$ and $
u\in L^{p(\cdot)}.$ Then

\begin{enumerate}
  \item \label{Lem1.4a}$\rho_{p(\cdot)}(u)$ is concave:
$\forall u,v\in L^{p(\cdot)}, \alpha,\beta\geq0, \alpha+\beta=1,$
$$\rho_{p(\cdot)}(\alpha u+\beta v)\geq \alpha \rho_{p(\cdot)}(u)+\beta \rho_{p(\cdot)}(v).$$

 \item \label{Lem1.4b}if $ \|u\|_{p(\cdot)}\leq1, $ then
$\|u\|_{p(\cdot)}\leq\rho_{p(\cdot)}(u);$
if $\|u\|_{p(\cdot)}\geq1,$ then $\|u\|_{p(\cdot)}\geq\rho_{p(\cdot)}(u).$
\end{enumerate}
\end{lemma}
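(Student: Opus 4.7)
The plan is to prove (a) from pointwise concavity of the power $t \mapsto t^{p(\omega)}$ on $[0,\infty)$ (valid because $p(\omega)\le p^+ \le 1$), and to derive (b) as an immediate corollary of parts \eqref{Lem1.1c} and \eqref{Lem1.1d} of Lemma \ref{Lem1.1} once the extra hypothesis $p^+\le 1$ is exploited.

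For (a), fix $\omega \in \Omega$. Since $0 < p(\omega)\le 1$, the map $t \mapsto t^{p(\omega)}$ is concave on $[0,\infty)$, so for non-negative arguments $u(\omega), v(\omega)$ and convex weights $\alpha,\beta$ one has the pointwise inequality $(\alpha u(\omega) + \beta v(\omega))^{p(\omega)} \ge \alpha\, u(\omega)^{p(\omega)} + \beta\, v(\omega)^{p(\omega)}$. Only non-negative quantities enter $\rho_{p(\cdot)}$ (the modular sees $|u|,|v|$) and the piece $\{p=\infty\}$ in \eqref{eq1.1} is vacuous since $p^+\le 1$, so integrating over $\Omega$ with respect to $\mu$ produces $\rho_{p(\cdot)}(\alpha u+\beta v)\ge \alpha\rho_{p(\cdot)}(u)+\beta\rho_{p(\cdot)}(v)$, as required.

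For (b), suppose first $\|u\|_{p(\cdot)}\le 1$. Lemma \ref{Lem1.1}\eqref{Lem1.1c} gives $\|u\|_{p(\cdot)}^{p^+}\le \rho_{p(\cdot)}(u)$, and the elementary fact that $a\le a^s$ for $a\in[0,1]$ and $s\in(0,1]$ (applied with $a=\|u\|_{p(\cdot)}$ and $s=p^+$) yields $\|u\|_{p(\cdot)}\le \|u\|_{p(\cdot)}^{p^+}$; chaining these two inequalities gives $\|u\|_{p(\cdot)}\le \rho_{p(\cdot)}(u)$. The case $\|u\|_{p(\cdot)}\ge 1$ is strictly symmetric: Lemma \ref{Lem1.1}\eqref{Lem1.1d} gives $\|u\|_{p(\cdot)}^{p^+}\ge \rho_{p(\cdot)}(u)$, and $a\ge a^s$ for $a\ge 1$, $s\le 1$ yields $\|u\|_{p(\cdot)}\ge \|u\|_{p(\cdot)}^{p^+}\ge \rho_{p(\cdot)}(u)$.

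The only real subtlety, and the one point I would take extra care to flag, sits in (a): the function $t\mapsto |t|^{p(\omega)}$ is \emph{not} concave on all of $\R$ when $p(\omega)<1$ (there is a downward cusp at the origin), so the concavity statement has to be read with the arguments of the modular understood as the non-negative quantities $|u|,|v|$ rather than the signed $u,v$. Once that reading is made explicit, there is no serious obstacle: (a) is a one-line Jensen-type inequality followed by an expectation, and (b) is two lines of power-of-scalar comparison using Lemma \ref{Lem1.1}.
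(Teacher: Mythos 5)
Your proof is correct and follows essentially the same route as the paper's: part \eqref{Lem1.4a} via the pointwise inequality $(\alpha t+\beta s)^{p}\geq \alpha t^{p}+\beta s^{p}$ for $p\leq 1$ followed by taking expectations, and part \eqref{Lem1.4b} directly from Lemma \ref{Lem1.1}\eqref{Lem1.1c} and \eqref{Lem1.1d} together with the scalar comparison $a\lessgtr a^{p^+}$. The caveat you raise about signed arguments is a genuine point the paper glosses over --- its proof asserts $|\alpha u+\beta v|^{p}\geq \alpha|u|^{p}+\beta|v|^{p}$ for arbitrary $u,v$, which fails (take $u=1$, $v=-1$, $\alpha=\beta=\tfrac12$), so the concavity statement is only valid on the cone of non-negative functions, exactly as you note.
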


\begin{proof}[Proof of Lemma \ref{Lem1.4}]For any $\alpha, \beta, u, v,$ since $p\leq1,$
we have $|\alpha u+\beta v|^p\geq \alpha |u|^p+\beta |v|^p.$
It follows that $$\rho_{p(\cdot)}(\alpha u+\beta v)\geq \alpha \rho_{p(\cdot)}(u)+\beta \rho_{p(\cdot)}(v).$$
In view of Lemma \ref{Lem1.1}\eqref{Lem1.1c}, we get \eqref{Lem1.4b}.
\end{proof}

In this paper, we shall use the following theorem many times, which
is known as Aoki-Rolewicz's theorem.
\begin{theorem}\cite[Theorem 1.3]{Mitrea}\label{lem00} Let $X$ be a vector space equipped with a quasinorm
$\|\cdot\|.$ Then there exists a quasinorm $\|\cdot\|_*$ on X that
is equivalent to $\|\cdot\|$ and is a $\eta-$norm for some $0<\eta\leq1,$
i.e., it satisfies
$\|x+y\|^\eta_*\leq\|x\|^\eta_*+\|y\|^\eta_*$
 for all $x,y\in X.$
\end{theorem}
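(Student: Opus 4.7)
The plan is to construct the equivalent $\eta$-norm explicitly as an infimum over all finite decompositions, a construction that automatically produces an $\eta$-subadditive functional; the real work is then to show that this infimum is not much smaller than the original quasi-norm. Let $K\geq 1$ be the constant such that $\|x+y\|\leq K(\|x\|+\|y\|)$ for all $x,y\in X$. I would choose $\eta\in(0,1]$ by the calibration $(2K)^\eta=2$, i.e.\ $\eta=\log 2/\log(2K)$; this specific choice is the one that makes a geometric-series estimate close in the key step below.

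Define, for $x\neq 0$,
\[
\|x\|_*=\inf\left\{\Bigl(\sum_{j=1}^{N}\|x_j\|^{\eta}\Bigr)^{1/\eta}\;:\;N\in\mathbb{N},\;x=\sum_{j=1}^{N}x_j\right\},
\]
with $\|0\|_*=0$. Homogeneity $\|\lambda x\|_*=|\lambda|\,\|x\|_*$ is immediate from scaling each decomposition. The $\eta$-subadditivity is structural: given near-optimal decompositions $x=\sum_i x_i$ and $y=\sum_j y_j$, their concatenation decomposes $x+y$, and taking the infimum yields $\|x+y\|_*^{\eta}\leq\|x\|_*^{\eta}+\|y\|_*^{\eta}$. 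The trivial decomposition $N=1$ also gives $\|x\|_*\leq\|x\|$ at once, so positive-definiteness of $\|\cdot\|_*$ will follow once equivalence with $\|\cdot\|$ is established.

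The core step is the reverse estimate: for some constant $C=C(K)$ and every finite decomposition $x=x_1+\cdots+x_N$,
\[
\|x\|\leq C\Bigl(\sum_{j=1}^{N}\|x_j\|^{\eta}\Bigr)^{1/\eta}.
\]
To prove it I would normalize so that $\sum_j\|x_j\|^{\eta}=1$ and reorder so that $\|x_1\|\geq\|x_2\|\geq\cdots\geq\|x_N\|$. I then partition the indices into dyadic blocks $I_k=\{j:2^{-k-1}<\|x_j\|\leq 2^{-k}\}$ for $k\in\mathbb{Z}$; the normalization forces $|I_k|\leq 2^{(k+1)\eta}$. Within each block, grouping the vectors pairwise in a binary-tree fashion and applying the quasi-triangle inequality at most $\lceil\log_2|I_k|\rceil$ times bounds the norm of the partial sum over $I_k$ by $K^{\log_2|I_k|}\cdot|I_k|\cdot 2^{-k}$. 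Since $K^{\log_2|I_k|}=|I_k|^{\log_2 K}$, this partial-sum bound becomes a constant multiple of $|I_k|^{1+\log_2 K}\cdot 2^{-k}\leq 2^{(k+1)\eta(1+\log_2 K)-k}$. Finally one iterates the quasi-triangle inequality once more across the blocks (or, equivalently, sums with weights $K^m$) and checks that the exponent of $2^{-k}$ remains strictly positive precisely because of the calibration $(2K)^\eta=2$: one computes $\eta(1+\log_2 K)=\eta\log_2(2K)=1$, so the per-block bound becomes $2\cdot 2^{-k(1-\eta)}\cdot 2^{\eta}$, which sums to a finite geometric series (after a symmetric argument for $k<0$ using the normalization).

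The main obstacle is this last dyadic-grouping estimate: the quasi-triangle inequality only gives a constant $K^m$ after $m$ applications, and naively this blows up, so one has to balance the combinatorial decay of $|I_k|$ with the exponential growth of $K^m$. It is the identity $(2K)^\eta=2$ that closes this balance and dictates the specific $\eta$ in the statement. Once the reverse estimate is in hand, combined with $\|x\|_*\leq\|x\|$ we obtain the equivalence of $\|\cdot\|_*$ with $\|\cdot\|$, and the $\eta$-subadditivity established by construction completes the proof.
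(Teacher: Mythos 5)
The paper itself offers no proof of this statement---it is the Aoki--Rolewicz theorem, quoted verbatim from \cite[Theorem 1.3]{Mitrea}---so I can only judge your argument on its own terms. Your skeleton is the standard one and is sound: defining $\|x\|_*$ as the infimum of $(\sum_j\|x_j\|^\eta)^{1/\eta}$ over finite decompositions makes $\eta$-subadditivity and $\|x\|_*\le\|x\|$ automatic, the calibration $(2K)^\eta=2$ is the right one, and everything correctly reduces to the reverse estimate $\|\sum_j x_j\|\le C(\sum_j\|x_j\|^\eta)^{1/\eta}$. The gap is in your proof of that reverse estimate. With $\sum_j\|x_j\|^\eta=1$ and $|I_k|\le 2^{(k+1)\eta}$, your within-block bound is $|I_k|^{1+\log_2K}\,2^{-k}\le 2^{(k+1)\eta(1+\log_2K)-k}$, and since, as you note, $\eta(1+\log_2K)=\eta\log_2(2K)=1$, this exponent is $(k+1)-k=1$: the per-block bound is the constant $2$, with no decay in $k$. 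The factor $2^{-k(1-\eta)}$ in your last step comes from writing the exponent as $(k+1)\eta-k$, i.e.\ from dropping the factor $1+\log_2K$; the claimed geometric series exists only when $K=1$. The calibration makes the within-block estimate exactly borderline, and then the final combination across the (unboundedly many) nonempty blocks, which costs a further factor of order $K^m$ for the $m$-th block, diverges. No choice of $\eta$ rescues this two-stage ``bound each level, then combine the levels'' strategy, because each stage is individually tight and their composition loses an unbounded factor.

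The standard repair interleaves the two stages. Since $\|u+v\|^\eta\le\big((2K)\max(\|u\|,\|v\|)\big)^\eta=2\max(\|u\|^\eta,\|v\|^\eta)$, two summands certified to satisfy $\|\cdot\|^\eta\le 2^{-k}$ merge into a single summand with $\|\cdot\|^\eta\le 2^{-(k-1)}$, i.e.\ one level up. Performing such merges like carries in binary addition---the quantity $\sum 2^{-\mathrm{level}}$ is conserved under merging and is initially less than $2$, so nothing climbs above level $0$---one terminates with at most one summand $z_j$ per level, hence $\|z_j\|^\eta\le 2^{-(j-1)}$ after reindexing. The elementary inequality $\|\sum_{j=1}^n y_j\|\le\max_j (2K)^j\|y_j\|$ (a one-line induction: peel off $y_1$ and use $K\cdot(2K)^{j-1}=\tfrac12(2K)^j$) then gives $\|\sum_i x_i\|\le 2K$, which is the reverse estimate with $C=2K$. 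This carrying step is the idea missing from your write-up; with it in place, the rest of your argument goes through.
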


\begin{remark}\label{rk01}
If $p^+<1,$ denote $\||\cdot|\|_{p(\cdot)}=\|\cdot\|_*^\eta,$ then
we have
$\||\cdot|\|_{p(\cdot)}\thickapprox\|\cdot\|^\eta_{p(\cdot)}$ and
\begin{equation}
\label{eq1.6} \||u+v|\|_{p(\cdot)}\leq\||u|\|_{p(\cdot)}+\||v|\|_{p(\cdot)},~\forall ~u,~v\in L^{p(\cdot)}.
\end{equation}
\end{remark}

\section{The atomic decompositions}\label{sec2}

Let $(\Sigma_n)_{n\geq 0}$ be a nondecreasing sequence of
sub-$\sigma$-algebras of $\Sigma$ with $\Sigma=\bigvee\Sigma_n$ and
$f=(f_n)_{n\geq0}$ a martingale adapted to $(\Sigma_n)_{n\geq0}$
with its difference sequence $(d_nf)_{n\geq0}, $ where
$d_nf=f_n-f_{n-1}$ (with convention $f_{-1}=0, \Sigma_{-1}=\{\Omega,\emptyset\}).$
We denote by $E_n$ the conditional
expectation with respect to $\Sigma_n.$ For a martingale
$f=(f_n)_{n\geq0},$ we define its maximal function $f^*$, square
function $S(f)$ and conditional square function $s(f)$ as usual. The
variable exponent Hardy spaces of martingales are defined as
follows:
\begin{eqnarray*}
  && H^*_{p(\cdot)}=\{f=(f_n): \|f\|_{H^*_{p(\cdot)}}=\|f^*\|_{p(\cdot)}<\infty\},\\
  && H^S_{p(\cdot)}=\{f=(f_n): \|f\|_{H^S_{p(\cdot)}}=\|S(f)\|_{p(\cdot)}<\infty\},\\
  && H^s_{p(\cdot)}=\{f=(f_n): \|f\|_{H^s_{p(\cdot)}}=\|s(f)\|_{p(\cdot)}<\infty\}.
\end{eqnarray*}
Let $\Lambda$ be the class of non-negative, non-decreasing and adapted sequence $\lambda=(\lambda_n)$ with $\lambda_\infty=\lim_{n\rightarrow\infty}\lambda_n.$ We define
\begin{eqnarray*}
&&\mathcal{Q}_{p(\cdot)}=\{f=(f_n): \exists \lambda\in\Lambda, S_n(f)\leq \lambda_{n-1},\|f\|_{\mathcal{Q}_{p(\cdot)}}
    =\inf_{\lambda\in\Lambda}\|\lambda_\infty\|_{p(\cdot)}<\infty\},\\
&&\mathcal{D}_{p(\cdot)}=\{f=(f_n): ~ \exists \lambda\in\Lambda, ~|f_n|\leq
    \lambda_{n-1}, \|f\|_{\mathcal{D}_{p(\cdot)}}
    = \inf_{\lambda\in\Lambda}\|\lambda_\infty\|_{p(\cdot)}<\infty\}.
\end{eqnarray*}
Using the method of \cite[P.51]{Long}, we can construct a
$\lambda\in\Lambda$ such that
$$\|f\|_{\mathcal Q_{p(\cdot)}}
=\|\lambda_\infty\|_{p(\cdot)}\hbox{ or }\|f\|_{\mathcal{D}_{p(\cdot)}}
=\|\lambda_\infty\|_{p(\cdot)},$$
which is called $f$'s optimal predictable
control in $\mathcal Q_{p(\cdot)}$ or $\mathcal{D}_{p(\cdot)},$
respectively.

Let $p\in \mathcal{P}.$ We say that a $\Sigma-$measurable
function $a$ is an atom of first category, if there exists a stopping
time $\tau$ such that
\begin{enumerate}
  \item \label{atom1} $E_n a=0, $ when $n\leq \tau;$
  \item \label{atom2} $\|s(a)\|_\infty\leq\|\chi_{\{\tau<\infty\}}\|_{p(\cdot)}^{-1}.$
\end{enumerate}
$a$ is said to be an atom of
second or third category if \eqref{atom2} holds when we use $S(a)$
or $a^*$ instead of $s(a)$, respectively. We denote by $p-A_1, p-A_2$ or $p-A_3$ the sets
of all atoms of first, second, or third category, respectively.

The atomic decomposition of Hardy spaces of functions defined on
$R^n$ is due to Coifman \cite{Coifman1}(see \cite{Coifman} for more information).
Meanwhile, Herz \cite{Herz} introduced the atomic decomposition to martingale
theory. Then Bernard and Maisonneuve \cite{Bernard}, Chevalier
\cite{Chevalier}, Weisz \cite{Weisz} used them to studied martingale
space theory (see \cite{Weisz1} for more information). In this
section, we establish some atomic decomposition theorems for the
martingales in $H^s_{p(\cdot) }, \mathcal{Q}_{p(\cdot)}$ and
$\mathcal{D}_{p(\cdot)},$ respectively.

\begin{theorem}\label{Thm2.1} Let $p\in \mathcal{P}$ and $f\in
H^s_{p(\cdot) }.$ Then there exist a sequence $(a^k, k\in Z)$ of
$p-A_1$ atoms with $\|a^k\|_{H^s_{p(\cdot)}}\leq1,$ and a sequence
$(\theta_k, k\in Z)$ of nonnegative numbers such that
\begin{equation}
\label{eq2.1} f_n=\sum_{k\in Z}\theta_k E_na^k, ~~a.e. ~~\forall
n\geq 0
\end{equation}
and
\begin{equation}
\label{eq2.2} C^{-1} (\sum_{k\in Z}\theta_k^{p^+})^{\frac{1}{p^+}}
\leq \|f\|_{H^s_{p(\cdot)}}\leq C  (\sum_{k\in Z}\theta_k^{p^-})^{\frac{1}{p^-}},
\end{equation}
where $C$ is a constant independent of $f.$

Moreover, the sum $\sum_{k=j}^m\theta_k a^k$ converges to $f$ in
$H^s_{p(\cdot) }$ as $j\rightarrow -\infty, m\rightarrow\infty.$
\end{theorem}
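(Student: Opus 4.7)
The plan is the classical Herz--Weisz atomic decomposition scheme adapted to the variable exponent setting. For each $k\in\Z$, introduce the stopping time $\tau_k=\inf\{n\geq 0:s_{n+1}(f)>2^k\}$, so that $A_k:=\{\tau_k<\infty\}=\{s(f)>2^k\}$ is nested and decreasing in $k$. Set $\theta_k=3\cdot 2^k\|\chi_{A_k}\|_{p(\cdot)}$ and define the candidate atom $a^k=\theta_k^{-1}(f^{\tau_{k+1}}-f^{\tau_k})$, where $f^\tau_n=f_{n\wedge\tau}$ is the stopped martingale. The verification that $a^k$ is a $p-A_1$ atom rests on three standard ingredients: (i) $E_na^k=\theta_k^{-1}(f_{n\wedge\tau_{k+1}}-f_{n\wedge\tau_k})=0$ for $n\leq\tau_k$; (ii) the identity $s_n(f^\tau)^2=\sum_{j\leq n\wedge\tau}E_{j-1}|d_jf|^2$ combined with the definition of $\tau_k$ yields $s(f^{\tau_k})\leq 2^k$, so the Minkowski-type sublinearity $s(u+v)\leq s(u)+s(v)$ gives $s(a^k)\leq 3\cdot 2^k/\theta_k=\|\chi_{A_k}\|_{p(\cdot)}^{-1}$; (iii) the support property $\{s(a^k)>0\}\subseteq A_k$ follows from the $\Sigma_{n-1}$-measurable factor $\chi_{\{\tau_k<n\leq\tau_{k+1}\}}$ appearing in $d_na^k$. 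Together (ii) and (iii) give $\|a^k\|_{H^s_{p(\cdot)}}\leq 1$, while \eqref{eq2.1} is the telescoping identity $\sum_{k=j}^m(f^{\tau_{k+1}}_n-f^{\tau_k}_n)=f^{\tau_{m+1}}_n-f^{\tau_j}_n$ together with the pointwise limits $f^{\tau_{m+1}}_n\to f_n$ and $f^{\tau_j}_n\to 0$ as $m\to+\infty$ and $j\to-\infty$.

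The norm inequalities \eqref{eq2.2} are the heart of the argument and hinge on the modular/norm dichotomy of Lemma \ref{Lem1.1}. By homogeneity I normalize to $\|s(f)\|_{p(\cdot)}=1$, so that $\rho_{p(\cdot)}(s(f))=1$ and, using $s(f)\geq 2^k\chi_{A_k}$, the crucial bound $\theta_k\leq 3$ places each $\|2^k\chi_{A_k}\|_{p(\cdot)}=\theta_k/3$ inside the unit ball. The key analytic step is the modular equivalence
\[
  \sum_{k\in\Z}\rho_{p(\cdot)}(2^k\chi_{A_k})\sim\rho_{p(\cdot)}(s(f)),
\]
which I would prove by observing that on each slice $A_{k_0}\setminus A_{k_0+1}=\{2^{k_0}<s(f)\leq 2^{k_0+1}\}$ the geometric series estimate $\sum_{k\leq k_0}2^{kp(\omega)}\asymp 2^{k_0p(\omega)}\asymp s(f)(\omega)^{p(\omega)}$ integrates to yield both directions. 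Combined with the two-sided bound $(\theta_k/3)^{p^+}\leq\rho_{p(\cdot)}(2^k\chi_{A_k})\leq(\theta_k/3)^{p^-}$ from Lemma \ref{Lem1.1}\eqref{Lem1.1c}, summing the lower estimate gives $\sum_k\theta_k^{p^+}\lesssim 1$, which yields the left half of \eqref{eq2.2} after unscaling; summing the upper estimate gives $1=\rho_{p(\cdot)}(s(f))\lesssim\sum_k\theta_k^{p^-}$, the right half.

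For convergence in $H^s_{p(\cdot)}$, decompose $f-\sum_{k=j}^m\theta_ka^k=(f-f^{\tau_{m+1}})+f^{\tau_j}$. The identity $s(f-f^{\tau_{m+1}})^2=\sum_nE_{n-1}|d_nf|^2\chi_{\{\tau_{m+1}<n\}}$ tends pointwise to $0$ as $m\to\infty$ (since $\tau_{m+1}\to\infty$ a.s.\ on $\{s(f)<\infty\}$) and is dominated by $s(f)^2$; and $s(f^{\tau_j})\leq 2^j\to 0$ as $j\to-\infty$. Lebesgue dominated convergence forces the associated modulars to tend to $0$, and Lemma \ref{Lem1.2}\eqref{Lem1.2b} then upgrades this to convergence in quasi-norm. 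The main obstacle throughout is the modular/norm dichotomy of Lemma \ref{Lem1.1}\eqref{Lem1.1c}--\eqref{Lem1.1d}: the asymmetric exponents $p^+$ on the left and $p^-$ on the right of \eqref{eq2.2} are precisely those forced on the unit ball by Lemma \ref{Lem1.1}\eqref{Lem1.1c}, so the normalization step is indispensable; without it, a naive computation would produce a useless mix of $p^-$ and $p^+$ exponents.
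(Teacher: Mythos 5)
Your proposal is correct and follows essentially the same route as the paper: the same stopping times $\tau_k=\inf\{n:s_{n+1}(f)>2^k\}$, the same coefficients $\theta_k=3\cdot2^k\|\chi_{A_k}\|_{p(\cdot)}$ and atoms, the same geometric-series computation over the slices $A_k\setminus A_{k+1}$, and the same use of Lemma \ref{Lem1.1}\eqref{Lem1.1c} to produce the asymmetric $p^+$/$p^-$ exponents in \eqref{eq2.2}. The only (harmless) differences are organizational --- you normalize $\|s(f)\|_{p(\cdot)}=1$ and sum termwise modular bounds, where the paper instead introduces the auxiliary function $g=\sum_k3\cdot2^k\chi_{A_k}$ and evaluates the modular at the specific values $\gamma=(\sum_k\theta_k^{p^\pm})^{1/p^\pm}$ --- plus you leave implicit the remark that $a_n^k$ converges in $L^2$ so that $a^k$ is a genuine measurable function with $a_n^k=E_na^k$.
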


\begin{proof}[Proof of Theorem \ref{Thm2.1}]
For $f=(f_n)\in H^s_{p(\cdot)}$ and $k\in Z,$ define
stopping times
\begin{equation}
\label{eq2.3} \tau_k=\inf \{n: s_{n+1}(f)>2^k\}, ~~ (\inf \emptyset=\infty)
\end{equation}
and $\theta_k=3\cdot2^k \|\chi_{A_k}\|_{p(\cdot)},$ where
$A_k=\{\tau_k<\infty\}=\{s(f)>2^k\}.$
Let
$$a^k_n=\theta_k^{-1}(f^{\tau_{k+1}}_n-f^{\tau_k}_n),~~~ a^k=(a^k_n)_{n=0}^\infty.$$
Then $da^k_n=\theta_k^{-1}(df_{\tau_{k+1}\wedge n}-df_{\tau_k\wedge n}).$
It follows that $a^k$ is a martingale with
\begin{equation}
\label{eq2.04}E_n a^k=0, ~\forall n\leq \tau_k
\end{equation}
and
\begin{equation}
\label{eq2.4} \sum_{k\in Z}\theta_ka^k_n=\sum_{k\in Z}(f_{\tau_{k+1}\wedge n}-f_{\tau_k\wedge n})=f_n.
\end{equation}
Moreover
$$s(a^k)\leq\theta_k^{-1}(s_{\tau_{k+1}}(f)+s_{\tau_k}(f))
\leq\theta_k^{-1}(3\cdot2^k)
\leq \|\chi_{A_k}\|_{p(\cdot)}^{-1}.
$$ Thus
\begin{equation}
\label{eq2.5}\|s(a^k)\|_\infty \leq \|\chi_{A_k}\|_{p(\cdot)}^{-1}, ~~~ \forall k\in Z.
\end{equation}
By classical Burkholder-Gundy-Davis inequality, $a_n^k$ converges to
a function a.e. and in $L_2.$ We still denote the function by
$a^k.$ Then $ a^k\in L_2$ and $a^k_n=E_na^k.$ It follows from \eqref{eq2.04} and \eqref{eq2.5} that every $a^k$ is a
$p-A_1$ atom and \eqref{eq2.1} holds.
In addition, using \eqref{eq2.5}, we get
$$\rho_{p(\cdot)}(s(a^k))=Es(a^k)^p\chi_{A_k}
=E[s(a^k)\|\chi_{A_k}\|_{p(\cdot)}]^p[\frac{\chi_{A_k}}{\|\chi_{A_k}\|_{p(\cdot)}}]^p
\leq E[\frac{\chi_{A_k}}{\|\chi_{A_k}\|_{p(\cdot)}}]^p =1,$$
Then, by Lemma \ref{Lem1.1}\eqref{Lem1.1b}, we have
$$ \|a^k\|_{H^s_{p(\cdot)}}=\|s(a^k)\|_{p(\cdot)}\leq1, ~~~ \forall k\in Z.$$

To estimate \eqref{eq2.2}, we split it into three steps.

$Step~1.$  We define the function $g=\sum_{k\in Z} 3\cdot
2^k\chi_{A_k}$ and show $\|g\|_{p(\cdot) }\sim
\|f\|_{H^s_{p(\cdot)}}.$ It is clear that
$$g= 3\sum_{k\in Z}2^k\chi_{A_k}=3\sum_{k\in Z}(2^{k+1}-2^k)\chi_{A_k}
=6\sum_{k\in Z} 2^k\chi_{A_k\setminus A_{k+1}}.$$
For any $\gamma>0,$
we have
\begin{eqnarray*}
\int_\Omega(\frac{\sum_{k\in Z}2^k\chi_{A_k \backslash
A_{k+1}}}{\gamma})^p  d\mu
    &=&\sum_{k\in Z}\int_{A_k \backslash A_{k+1}}(\frac{2^k}{\gamma})^pd\mu \\
&\leq& \sum_{k\in Z}\int_{A_k \backslash
A_{k+1}}(\frac{s(f)}{\gamma})^p  d\mu
    \leq\int_\Omega(\frac{s(f)}{\gamma})^p  d\mu.
\end{eqnarray*}
Taking
$\gamma=\|s(f)\|_{p(\cdot) }$
and noticing
$\int_\Omega(\frac{s(f)}{\gamma})^p d\mu
=\rho_{p(\cdot)}(\frac{s(f)}{\|s(f)\|_{p(\cdot)}})=1,$
we get
$$\| \sum_{k\in Z}2^k \chi_{A_k \backslash A_{k+1}}\|_{p(\cdot) }\leq
\gamma.$$
Hence $\|g\|_{p(\cdot) }\leq 6 \|f\|_{H^s_{p(\cdot) }}.$
On the other hand, for any $\gamma>0,$ we have
$$\int_\Omega(\frac{\sum_{k\in Z}2^{k+1}\chi_{A_k \backslash A_k}}{\gamma})^p  d\mu
=\sum_{k\in Z}\int_{A_k \backslash A_{k+1}}(\frac{2^{k+1}}{\gamma})^p  d\mu
\geq\int_\Omega(\frac{s(f)}{\gamma})^p  d\mu,$$
Taking $\gamma=\|s(f)\|_{p(\cdot)},$ we get
$$\| \sum_{k\in Z}2^{k+1} \chi_{A_k \backslash A_{k+1}}\|_{p(\cdot)}\geq\gamma.$$
Hence $\|g\|_{p(\cdot)}\geq 3 \|f\|_{H^s_{p(\cdot)}}.$

$Step~2.$ To estimate the first inequality in \eqref{eq2.2}. For any
$\gamma>0,$ we have
\begin{eqnarray*}
E(\frac{g}{\gamma})^p &=& E(\sum_{k\in
Z}\frac{6\cdot2^k\chi_{A_k\setminus
      A_{k+1}}}{\gamma})^p = E\sum_{k\in Z}(\frac{6\cdot2^k\chi_{A_k\setminus A_{k+1}}}{\gamma})^p \\
&=&E\sum_{k\in Z}(2^p-1)(\frac{3 \cdot 2^k\chi_{A_k}}{\gamma})^p
      \geq (2^{p^-}-1)E \sum_{k\in Z}(\frac{\theta_k\chi_{A_k}}{\gamma \|\chi_{A_k}\|_{p(\cdot)}})^p.
\end{eqnarray*}
Taking $\gamma=(\sum_{k\in Z}\theta_k^{p^+})^\frac{1}{p^+}$ and denoting $C=2^{p^-}-1,$ we get
$$E(\frac{g}{\gamma})^p
\geq C \sum_{k\in Z}(\frac{\theta_k}{\gamma})^{p^+}E(\frac{\chi_{A_k}}{\|\chi_{A_k}\|_{p(\cdot) }})^p
= C \sum_{k\in Z}(\frac{\theta_k}{\gamma})^{p^+} = C.$$
Lemma \ref{Lem1.1}\eqref{Lem1.1c} shows that
$\|\frac{g}{\gamma}\|_{p(\cdot) }\geq C.$  It follows that
$\|g\|_{p(\cdot) }\geq C\gamma
= C (\sum_{k\in Z}\theta_k^{p^+})^\frac{1}{p^+}.$

$Step~3.$ To estimate the second inequality in \eqref{eq2.2}. For
any $\gamma>0,$ we have
$$E(\frac{g}{\gamma})^p
= E\sum_{k\in Z}(2^p-1)(\frac{3 \cdot 2^k\chi_{A_k}}{\gamma})^p
\leq (2^{p^+}-1)E \sum_{k\in Z}(\frac{\theta_k\chi_{A_k}}{\gamma \|\chi_{A_k}\|_{p(\cdot)}})^p.
$$
If $\sum_{k\in Z}\theta_k^{p^-}<\infty$, taking $\gamma=(\sum_{k\in
Z}\theta_k^{p^-})^\frac{1}{p^-}$ (the inequality naturally holds
when $\sum_{k\in Z}\theta_k^{p^-}=\infty$) and denoting
$C=2^{p^+}-1,$ we get
$$E(\frac{g}{\gamma})^p
\leq C \sum_{k\in Z}(\frac{\theta_k}{\gamma})^{p^-}E(\frac{\chi_{A_k}}{\|\chi_{A_k}\|_{p(\cdot) }})^p
= C \sum_{k\in Z}(\frac{\theta_k}{\gamma})^{p^-} = C.$$
Therefore,
$$\|g\|_{p(\cdot) }\leq C\gamma
= C (\sum_{k\in Z}\theta_k^{p^-})^\frac{1}{p^-}.$$

Now we prove that the sum $\sum_{k=j}^m\theta_k a^k$ converges to
$f$ in $H^s_{p(\cdot)}.$ It follows from \eqref{eq2.1} that $f=
\sum_{k\in Z}\theta_ka^k$ and $f-\sum_{k=j}^m\theta_k
a^k=f-f^{\tau_{m+1}}+f^{\tau_j}.$ Because of $s(f^{\tau_j})\leq
2^j,$ we have $\|f^{\tau_j}\|_{p(\cdot) }\leq2^j\rightarrow 0$ as
$j\rightarrow -\infty.$ In addition, we have
$$s(f-f^{\tau_{m+1}})^p \leq s(f)^p
\hbox{ and } s(f-f^{\tau_{m+1}})^p \rightarrow0.$$
In view of
Lebesgue's dominated convergence, we have
$\rho_{p(\cdot)}(s(f-f^{\tau_{m+1}}))\rightarrow0$ as $m\rightarrow\infty.$ It follows that
$\|s(f-f^{\tau_{m+1}})\|_{p(\cdot) }\rightarrow0$
as $m\rightarrow\infty$. Finally, the sum $\sum_{k=j}^m\theta_k a^k$ converges to $f$ in
$H^s_{p(\cdot) }$ as $j\rightarrow -\infty, m\rightarrow\infty.$
\end{proof}

\begin{theorem}\label{Thm2.2} Let $p\in \mathcal{P}$ and $f\in
\mathcal{Q}_{p(\cdot)} (\mathcal{D}_{p(\cdot)}).$ Then there exist a
sequence $(a^k, k\in Z)$ of $p-A_2 (p-A_3)$ atoms and a sequence
$(\theta_k, k\in Z)$ of nonnegative numbers such that
\begin{equation}
\label{eq2.6} f_n=\sum_{k\in Z}\theta_k E_na^k,~~a.e. ~~ \forall
n\geq 0
\end{equation}
and
\begin{equation}
\label{eq2.7} C^{-1}(\sum_{k\in Z}\theta_k^{p^+})^{\frac{1}{p^+}}
\leq \|f\|_{\mathcal{Q}_{p(\cdot)}}(\|f\|_{\mathcal{D}_{p(\cdot)}})
\leq C(\sum_{k\in Z}\theta_k^{p^-})^{\frac{1}{p^-}},
\end{equation}
where $C$ is a
constant independent of $f.$

Moreover, if $p^+\leq\eta,$ then $\sum_{k=j}^m\theta_k a^k$ converges to $f$ in
$\mathcal{Q}_{p(\cdot)}(\mathcal{D}_{p(\cdot)})'$s quasi-norm, as
$j\rightarrow -\infty, m\rightarrow\infty.$
\end{theorem}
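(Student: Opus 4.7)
The plan is to mimic Theorem \ref{Thm2.1}, with the conditional square function $s(f)$ replaced by an optimal predictable control $\lambda$. For $f\in\mathcal{Q}_{p(\cdot)}$ (respectively $\mathcal{D}_{p(\cdot)}$), fix $\lambda=(\lambda_n)\in\Lambda$ with $S_n(f)\leq\lambda_{n-1}$ (respectively $|f_n|\leq\lambda_{n-1}$) and $\|\lambda_\infty\|_{p(\cdot)}=\|f\|_{\mathcal{Q}_{p(\cdot)}}$ (respectively $=\|f\|_{\mathcal{D}_{p(\cdot)}}$). Define
$$\tau_k=\inf\{n:\lambda_n>2^k\},\quad A_k=\{\tau_k<\infty\}=\{\lambda_\infty>2^k\},\quad \theta_k=3\cdot 2^k\|\chi_{A_k}\|_{p(\cdot)},$$
and put $a^k_n=\theta_k^{-1}(f_n^{\tau_{k+1}}-f_n^{\tau_k})$. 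The key ingredient is the predictable bound $\lambda_{\tau_k-1}\leq 2^k$, which plays the role of $s_{\tau_k}(f)\leq 2^k$ in Theorem \ref{Thm2.1}.

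Next I would verify the atom structure. Since $f^{\tau_{k+1}}_n$ and $f^{\tau_k}_n$ coincide for $n\leq\tau_k$, we have $a^k_n=0$ on $\{n\leq\tau_k\}$, giving $E_n a^k=0$. In the $\mathcal{Q}$-case,
$$S(a^k)\leq\theta_k^{-1}\bigl(S_{\tau_{k+1}}(f)+S_{\tau_k}(f)\bigr)\leq\theta_k^{-1}(\lambda_{\tau_{k+1}-1}+\lambda_{\tau_k-1})\leq\theta_k^{-1}\cdot 3\cdot 2^k=\|\chi_{A_k}\|_{p(\cdot)}^{-1},$$
so $a^k$ is a $p$-$A_2$ atom; the $\mathcal{D}$-case is identical with $|f_{\tau\wedge n}|\leq\lambda_{\tau-1}$ in place of the square-function bound, producing a $p$-$A_3$ atom. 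The telescoping identity $\sum_{k\in\mathbb{Z}}\theta_k a^k_n=f_n$ then follows from $f^{\tau_k}_n\to 0$ as $k\to-\infty$ (since the control is bounded by $2^k$) and $f^{\tau_k}_n\to f_n$ as $k\to\infty$ (since $\lambda_\infty<\infty$ a.e.).

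To prove \eqref{eq2.7}, introduce $g=\sum_{k\in\mathbb{Z}}3\cdot 2^k\chi_{A_k}$. The disjoint layers $A_k\setminus A_{k+1}=\{2^k<\lambda_\infty\leq 2^{k+1}\}$ yield the pointwise equivalence $g\sim\lambda_\infty$, so $\|g\|_{p(\cdot)}\sim\|\lambda_\infty\|_{p(\cdot)}$, which equals $\|f\|_{\mathcal{Q}_{p(\cdot)}}$ (respectively $\|f\|_{\mathcal{D}_{p(\cdot)}}$). The two-sided estimate against $(\sum_k\theta_k^{p^\pm})^{1/p^\pm}$ then follows verbatim from Steps~2 and~3 of Theorem \ref{Thm2.1}: expand $g^p$ using the indicators $\chi_{A_k\setminus A_{k+1}}$, replace them by $\chi_{A_k}$ at the cost of the universal factors $2^{p^\pm}-1$, normalize by $\gamma=(\sum_k\theta_k^{p^\pm})^{1/p^\pm}$, and apply Lemma \ref{Lem1.1}\eqref{Lem1.1c}.

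Finally, for the quasi-norm convergence of partial sums when $p^+\leq\eta$, I would pass to the equivalent $\eta$-norm furnished by Theorem \ref{lem00} and Remark \ref{rk01}. The partial sums telescope to $f-\sum_{k=j}^m\theta_k a^k=(f-f^{\tau_{m+1}})+f^{\tau_j}$. One has $\|f^{\tau_j}\|_{\mathcal{Q}_{p(\cdot)}}\leq 2^j\|1\|_{p(\cdot)}\to 0$ as $j\to-\infty$, with the constant $2^j$ acting as a predictable control, and $\|f-f^{\tau_{m+1}}\|_{\mathcal{Q}_{p(\cdot)}}\leq\|\lambda_\infty\chi_{A_{m+1}}\|_{p(\cdot)}\to 0$ as $m\to\infty$ via dominated convergence at the modular level together with Lemma \ref{Lem1.2}\eqref{Lem1.2b}, since $\mu(A_{m+1})\to 0$. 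The hypothesis $p^+\leq\eta$ is used precisely to combine these two residuals through the $\eta$-subadditivity of Remark \ref{rk01}; I expect this final step, and the careful tracking of the role of $\eta$, to be the main technical obstacle rather than the atomic construction itself.
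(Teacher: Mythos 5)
Your construction coincides with the paper's: the same stopping times $\tau_k=\inf\{n:\lambda_n>2^k\}$ built from the optimal predictable control, the same coefficients $\theta_k=3\cdot2^k\|\chi_{A_k}\|_{p(\cdot)}$ and atoms $a^k_n=\theta_k^{-1}(f^{\tau_{k+1}}_n-f^{\tau_k}_n)$, and the same comparison of $\|\lambda_\infty\|_{p(\cdot)}$ with $\|g\|_{p(\cdot)}$ to obtain \eqref{eq2.7}; all of this is correct and matches the paper. The one place you genuinely diverge is the final convergence claim. The paper controls $S_n(f-f^{\tau_{m+1}})$ by the infinite tail $\sum_{k\geq m+1}\theta_k S_n(a^k)$ together with its predictable majorant, and then needs the $\eta$-subadditivity of Remark \ref{rk01} plus the embedding $\sum_{k\geq m+1}\theta_k^{\eta}\leq(\sum_{k\geq m+1}\theta_k^{p^+})^{\eta/p^+}$ --- this infinite sum is exactly where the hypothesis $p^+\leq\eta$ is used. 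You instead dominate the residual directly: the sequence $\lambda_n\chi_{\{\tau_{m+1}\leq n\}}$ is an admissible predictable control of $S_n(f-f^{\tau_{m+1}})$, so $\|f-f^{\tau_{m+1}}\|_{\mathcal{Q}_{p(\cdot)}}\leq\|\lambda_\infty\chi_{A_{m+1}}\|_{p(\cdot)}\to0$ by dominated convergence at the modular level and Lemma \ref{Lem1.2}\eqref{Lem1.2b}. This route is valid and arguably cleaner, since combining the two residuals $f-f^{\tau_{m+1}}$ and $f^{\tau_j}$ only requires the two-term quasi-norm inequality \eqref{eq1.3}; your closing remark that $p^+\leq\eta$ is needed ``precisely to combine these two residuals'' therefore slightly misplaces where the paper actually uses that hypothesis, and your own argument in fact shows the convergence statement does not need it.
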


\begin{proof}[Proof of Theorem \ref{Thm2.2}] The proof is similar to that of Theorem \ref{Thm2.1}. Here we only
give a outline of the proof for $\mathcal{Q}_{p(\cdot)}.$

Let $f\in \mathcal{Q}_{p(\cdot)}$ and $\lambda=(\lambda_n)$ be
$S_n(f)$'s optimal predictable control. We define $$\tau_k=\inf \{n:
\lambda_n> 2^k\},~~ A_k=\{\tau_k<\infty\}=\{\lambda_\infty>2^k\}$$
and $\theta_k, a^k_n, a^k$ as in the proof of Theorem \ref{Thm2.1}.
Instead of estimating $s(f),$ we estimate
$\lambda_\infty$. We have
$$\|\lambda_\infty\|_\infty \leq \|\chi_{A_k}\|_{p(\cdot)}^{-1}, ~~
\|a^k\|_{\mathcal{Q}_{p(\cdot)}}\leq1$$
and \eqref{eq2.6} holds.
To estimate $\|\lambda_\infty\|_{p(\cdot)},$
we define $g$ as in the proof of Theorem \ref{Thm2.1}. Then similar
computations show that $\|\lambda_\infty\|_{p(\cdot)} \sim
\|g\|_{p(\cdot)}$ and \eqref{eq2.7} holds.

It remains to prove the last statement. It is clear that
$$S(f-f^{\tau_{m+1}}) \leq \sum_{k\geq m+1}S(f^{\tau_{k+1}}-f^{\tau_k}) =\sum_{k\geq m+1}\theta_k S(a^k).$$
Set $\rho^k_n=\|\lambda_\infty\|_{p(\cdot)}\chi_{\{\tau_k\leq n\}},~
\rho_n= \sum_{k=m+1}^\infty\theta_k\rho^k_n,$ then $\rho^k_n$ is
adapted and
$$S_n(f-f^{\tau_{m+1}}) \leq \sum_{k\geq m+1}S(f^{\tau_{k+1}}_n-f^{\tau_k}_n)
\leq \sum_{k\geq m+1}\theta_k S_n(a^k) \leq \rho_{n-1}.$$
It follows that $(\rho_n,n\geq0)$ is a predictable control of $(S_n(f-f^{\tau_{m+1}}), n\geq0).$
In addition, we have
\begin{eqnarray*}
\|f-f^{\tau_{m+1}}\|^\eta_{\mathcal{Q}_{p(\cdot)}} &\leq &
 \|\rho_\infty\|^\eta_{p(\cdot)} \leq  C \sum_{k\geq
 m+1}\|\theta_k\lambda_\infty\chi_{A_k}\|^\eta_{p(\cdot)} \\
                 &\leq & C \sum_{k\geq m+1}\theta^\eta_k
        \leq C(\sum_{k\geq m+1}\theta_k^{p^+})^{\frac{\eta}{p^+}}.
\end{eqnarray*}
Thus, $\|f-f^{\tau_{m+1}}\|_{\mathcal{Q}_{p(\cdot)}}\rightarrow 0$ in
as $m\rightarrow +\infty.$ Moreover, because of
$S(f^{\tau_j})\leq 2^j,$ we have that
$\|f^{\tau_j}\|_{\mathcal{Q}_{p(\cdot)}}\rightarrow 0$ as
$j\rightarrow -\infty.$ Finally, $\sum_{k=j}^m\theta_k a^k$
converges to $f$ in $\mathcal{Q}_{p(\cdot)}$, as $j\rightarrow
-\infty, m\rightarrow\infty.$
\end{proof}

\section{On harmonic mean of variable exponent}\label{sec3}

In this section we define the harmonic mean and the averaging
operator. Using the method of proving \cite[Lemma 4.5.3.]{Diening3}, we estimate $\|\chi_A\|_{p(\cdot)},$ where $A\in \Sigma$ and $p\in \mathcal{P}.$
In what follows, $\mu(A)$ is denoted by $|A|,$ $\forall A\in\Sigma.$

\begin{definition}\cite{Diening3}\label{def001}
Let $p\in L_+^0(\Omega)$ and
$A\in \Sigma$ with $|A|>0.$ We define the harmonic mean of $p$ by setting
\begin{equation*}
\label{eq3.1} \frac{1}{p_A}=\frac{1}{|A|}\int_A\frac{1}{p}d\mu.
\end{equation*}
\end{definition}

By Definition \ref{def001},  we immediately deduce Lemma \ref{Lem3.1}.

\begin{lemma}\label{Lem3.1} Let $p,q,r \in L_+^0(\Omega).$
\begin{enumerate}
  \item \label{Lem3.1a}If $s>0,$ then $(sp)_A=sp_A.$
  \item \label{Lem3.1b}If $\frac{1}{r}=\frac{1}{p}+\frac{1}{q},$ then
$\frac{1}{r_A}=\frac{1}{p_A}+\frac{1}{q_A}.$
\end{enumerate}
\end{lemma}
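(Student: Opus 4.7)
The plan is to unwind Definition \ref{def001} and rely on nothing more than linearity of the integral and the assumption $|A|>0$; both parts are truly one-line verifications, and indeed the authors themselves say the lemma is deduced ``immediately.'' I will treat the reciprocals $1/p$, $1/q$, $1/r$ as non-negative measurable functions on $A$, so their averages over $A$ lie in $(0,\infty]$ and obey the usual algebraic rules with the conventions $1/\infty = 0$ and $1/0 = \infty$.

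For part \eqref{Lem3.1a}, I would simply substitute $sp$ in place of $p$ in the definition and pull the positive scalar $s$ out of the integral:
\[
\frac{1}{(sp)_A} \;=\; \frac{1}{|A|}\int_A \frac{1}{sp}\,d\mu \;=\; \frac{1}{s}\cdot \frac{1}{|A|}\int_A \frac{1}{p}\,d\mu \;=\; \frac{1}{s\, p_A}.
\]
Taking reciprocals gives $(sp)_A = s\, p_A$.

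For part \eqref{Lem3.1b}, I would plug the pointwise identity $\frac{1}{r}=\frac{1}{p}+\frac{1}{q}$ into the defining average and split the integral:
\[
\frac{1}{r_A} \;=\; \frac{1}{|A|}\int_A \frac{1}{r}\,d\mu \;=\; \frac{1}{|A|}\int_A \frac{1}{p}\,d\mu + \frac{1}{|A|}\int_A \frac{1}{q}\,d\mu \;=\; \frac{1}{p_A}+\frac{1}{q_A}.
\]

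There is no real obstacle; the only point worth flagging is integrability. Since $p,q,r \in L^0_+(\Omega)$ and $A$ has positive measure, the reciprocals $1/p$ and $1/q$ are non-negative measurable functions on $A$, and the averages above make sense in $(0,\infty]$. Linearity of the integral for non-negative measurable functions then closes the argument in both parts.
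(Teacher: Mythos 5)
Your proof is correct and is exactly the ``immediate'' deduction the paper has in mind: the authors give no written proof, asserting the lemma follows directly from Definition \ref{def001}, and your one-line computations via linearity of the integral are precisely that argument. Nothing further is needed.
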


\begin{definition} Let $A\in \Sigma$ with $|A|>0.$ We define the averaging
operator $T_A: L^1\rightarrow L^0$ by setting
$T_Au=\frac{1}{|A|}\int_A u d\mu  \chi_A.$
\end{definition}

\begin{lemma}\label{Lem3.2} If $1\leq p^-\leq p^+<\infty,$ the operator
$T_A: L^{p(\cdot)}\rightarrow L^{p(\cdot)}$ is uniformly bounded
with respect to $A\in \Sigma.$
\end{lemma}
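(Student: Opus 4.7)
The plan is to exploit the simple structure $T_A u = c_A\,\chi_A$ with $c_A = \frac{1}{|A|}\int_A u\,d\mu$ to reduce the desired operator bound to a purely geometric estimate on $\|\chi_A\|_{p(\cdot)}$. By homogeneity it suffices to prove $\|T_A u\|_{p(\cdot)} \leq C$, with $C$ independent of $A$, for every $u$ with $\|u\|_{p(\cdot)} \leq 1$; and since $T_A u$ is a constant multiple of $\chi_A$ we immediately get
\[
\|T_A u\|_{p(\cdot)} = |c_A|\,\|\chi_A\|_{p(\cdot)}.
\]

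Next I would estimate $|c_A|$ by the variable-exponent H\"older inequality (Lemma \ref{Lem1.3}\eqref{Lem1.3b}) with $r=1$ and the pointwise conjugate exponent $p'(\cdot)$ defined by $1/p(\cdot)+1/p'(\cdot)=1$. The edge case $p^-=1$, in which $p'$ is $+\infty$ on $\{p=1\}$, is treated separately by the standard $L^1$--$L^\infty$ pairing on that piece. This gives
\[
|c_A|\cdot|A| \;\leq\; \int_A |u|\,d\mu \;\leq\; C\,\|u\|_{p(\cdot)}\,\|\chi_A\|_{p'(\cdot)},
\]
and therefore
\[
\|T_A u\|_{p(\cdot)} \;\leq\; \frac{C}{|A|}\,\|\chi_A\|_{p(\cdot)}\,\|\chi_A\|_{p'(\cdot)}.
\]
The whole lemma then reduces to the uniform estimate
\[
\|\chi_A\|_{p(\cdot)}\,\|\chi_A\|_{p'(\cdot)} \;\leq\; C\,|A|, \qquad A\in\Sigma,\ |A|>0.
\]

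I would obtain this by establishing $\|\chi_A\|_{p(\cdot)} \sim |A|^{1/p_A}$ together with the symmetric statement for $p'$, adapting the method of \cite[Lemma 4.5.3]{Diening3} to the present probability-space setting. Combined with Lemma \ref{Lem3.1}\eqref{Lem3.1b}, which gives $1/p_A+1/p'_A=1$, the product is then comparable to $|A|^{1/p_A+1/p'_A}=|A|$ and the proof is complete. The estimate $\|\chi_A\|_{p(\cdot)}\sim |A|^{1/p_A}$ itself I would approach by plugging the trial value $\gamma=|A|^{1/p_A}$ into the modular $\rho_{p(\cdot)}(\chi_A/\gamma)=\int_A \gamma^{-p(x)}d\mu$ and applying Jensen's inequality on the probability measure $\frac{1}{|A|}\chi_A\,d\mu$, with the convex exponential to match the harmonic mean.

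The main obstacle sits in the upper bound $\|\chi_A\|_{p(\cdot)} \lesssim |A|^{1/p_A}$: the usual Euclidean proof of this inequality leans on the log-H\"older continuity of $p(\cdot)$, a regularity assumption that (as the introduction emphasizes) has no counterpart on the abstract probability space $\Omega$. Crude pointwise bounds such as $\|\chi_A\|_{p(\cdot)} \leq |A|^{1/p^+}$ are not sharp enough---they overshoot $|A|$ when $p^- < p^+$---so the argument has to be tailored to characteristic functions and to the definition of the harmonic mean directly. Pushing this adaptation through without log-H\"older is the technical crux of the proof.
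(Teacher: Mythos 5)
Your opening reduction is fine: $T_Au=c_A\chi_A$, H\"older gives $|c_A|\,|A|\leq C\|u\|_{p(\cdot)}\|\chi_A\|_{p'(\cdot)}$, and the lemma would follow from the uniform estimate $\|\chi_A\|_{p(\cdot)}\|\chi_A\|_{p'(\cdot)}\lesssim|A|$. The gap is in how you propose to get that estimate. Jensen's inequality on the probability measure $\frac{1}{|A|}\chi_A\,d\mu$ with the convex function $t\mapsto\gamma^{-t}$ yields
$$\frac{1}{|A|}\int_A\gamma^{-p(x)}\,d\mu\;\geq\;\gamma^{-\frac{1}{|A|}\int_Ap\,d\mu},$$
i.e.\ a \emph{lower} bound for the modular $\rho_{p(\cdot)}(\chi_A/\gamma)$ and hence only the \emph{lower} bound $\|\chi_A\|_{p(\cdot)}\gtrsim|A|^{1/p_A}$. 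What your scheme actually requires is the \emph{upper} bound $\|\chi_A\|_{p(\cdot)}\lesssim|A|^{1/p_A}$ (for both $p$ and $p'$), equivalently $\int_A|A|^{-p(x)/p_A}\,d\mu\leq C$; no convexity inequality delivers this, since the integrand can be as large as $|A|^{-p^+/p_A}$ and one must genuinely control the portion of $A$ where $p$ exceeds $p_A$. You flag this as ``the technical crux'' but leave it unproved, so the proposal is incomplete precisely where the difficulty lies. Note also that H\"older gives $|A|\leq C\|\chi_A\|_{p(\cdot)}\|\chi_A\|_{p'(\cdot)}$ for free, and the product of the two Jensen lower bounds is already $\sim|A|$; none of this constrains the product from above.

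Worse, within this paper the logical dependence runs in the opposite direction: the upper bound \eqref{eq3.7} in Theorem \ref{Thm3.3} is \emph{deduced from} Lemma \ref{Lem3.2} (via the uniform bound $\|T_Au\|_{p(\cdot)}\leq C$), and Theorem \ref{Thm3.4}\eqref{Thm3.4a} is deduced from Theorem \ref{Thm3.3}. So filling your gap by appealing to those results would be circular. The paper's actual proof of Lemma \ref{Lem3.2} bypasses harmonic means entirely: writing $T_Au=2\lambda\chi_A$ with $\lambda=\frac{1}{2|A|}\int_Au\,d\mu$ and observing that $A\subseteq\{f^*>\lambda\}$ for the Doob maximal function of the martingale generated by $|u|$ through the $\sigma$-algebra $\{\emptyset,A,A^c,\Omega\}$, it applies the weak-type inequality \eqref{eq3.2} (valid for $p^-\geq1$, from \cite{Jiao}) to get $\|T_Au\|_{p(\cdot)}\leq2\|\lambda\chi_{\{f^*>\lambda\}}\|_{p(\cdot)}\leq C\|u\|_{p(\cdot)}$ in one line. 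To rescue your route you would need an independent proof of $\int_A|A|^{-p(x)/p_A}\,d\mu\leq C$; absent that, the weak-type maximal estimate is the tool to use.
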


\begin{proof}[Proof of Lemma \ref{Lem3.2}]
Recall that there is a constant $C>0$ such that
\begin{equation}\label{eq3.2}
\sup_{\lambda>0}\|\lambda \chi_{\{f^*>\lambda\}}\|_{p(\cdot)}\leq C\|f\|_{p(\cdot)}, ~\forall f=(f_n),
\end{equation}
where $f^*$ is Doob's maximal operator (see \cite{Jiao}).

For $u\in L^{p(\cdot)}$ and $A\in \Sigma,$ let $f=T_Au$ and $\lambda=\frac{1}{2|A|}\int_A u d\mu.$
Using \eqref{eq3.2}, we obtain
$$\|T_Au\|_{p(\cdot)}=\|2\lambda \chi_A\|_{p(\cdot)}
\leq 2\|\lambda \chi_{\{f^*>\lambda\}}\|_{p(\cdot)}
\leq C \|u\|_{p(\cdot)}.$$
\end{proof}

\begin{theorem} \label{Thm3.3} Let $p \in L_+^0(\Omega)$ with $p^->0$ and
$A\in \Sigma$ with $|A|>0,$ then
\begin{equation}\label{eq3.3}
\|\chi_A\|_{p(\cdot)}\sim|A|^{\frac{1}{p_A}}.
\end{equation}
\end{theorem}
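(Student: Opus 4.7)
The plan is to evaluate the modular $\rho_{p(\cdot)}(\chi_A/\gamma)$ at the natural scale $\gamma = c|A|^{1/p_A}$, bound it above and below by constants depending only on $p(\cdot)$, and extract the norm equivalence via Lemma \ref{Lem1.1}\eqref{Lem1.1b}--\eqref{Lem1.1d}.

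As a preparatory step, I would reduce to the case $p^- > 1$. Since $\chi_A^s = \chi_A$ for every $s>0$, Lemma \ref{Lem1.3}\eqref{Lem1.3a} gives the scaling identity $\|\chi_A\|_{sp(\cdot)} = \|\chi_A\|_{p(\cdot)}^{1/s}$, while Lemma \ref{Lem3.1}\eqref{Lem3.1a} gives $|A|^{1/(sp)_A} = (|A|^{1/p_A})^{1/s}$. Choosing $s > 1/p^-$ transports the claim to a new exponent $sp\in\mathcal{P}$ with $(sp)^- > 1$, putting us in the Banach regime where the dual exponent $p'$ satisfying $\tfrac{1}{p(\cdot)}+\tfrac{1}{p'(\cdot)}=1$ lies in $\mathcal{P}$ and obeys, by Lemma \ref{Lem3.1}\eqref{Lem3.1b}, the relation $\tfrac{1}{p'_A}=1-\tfrac{1}{p_A}$.

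For the upper bound, I would substitute $\gamma = c|A|^{1/p_A}$ into
\[
\rho_{p(\cdot)}\!\left(\frac{\chi_A}{\gamma}\right)
   = \int_A c^{-p(\omega)}\,|A|^{-p(\omega)/p_A}\,d\mu(\omega),
\]
and apply Jensen's inequality, exploiting convexity of $t\mapsto|A|^{-t}$ (valid since $|A|\le 1$ on a probability space) together with the defining identity $\int_A p^{-1}\,d\mu = |A|/p_A$ of the harmonic mean; one then chooses $c = c(p(\cdot))$ large enough to force the integral below $1$, so that Lemma \ref{Lem1.1}\eqref{Lem1.1b} gives $\|\chi_A\|_{p(\cdot)}\le \gamma \lesssim |A|^{1/p_A}$. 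For the lower bound, the cleanest route is duality: Hölder's inequality (Lemma \ref{Lem1.3}\eqref{Lem1.3b}) applied to the pair $(p,p')$ yields
\[
|A| = \int_\Omega \chi_A\cdot\chi_A\,d\mu
   \leq C\,\|\chi_A\|_{p(\cdot)}\,\|\chi_A\|_{p'(\cdot)},
\]
and applying the already-established upper bound to $p'$ gives $\|\chi_A\|_{p'(\cdot)} \lesssim |A|^{1-1/p_A}$, so that rearrangement produces $\|\chi_A\|_{p(\cdot)} \gtrsim |A|^{1/p_A}$.

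The principal technical obstacle is the Jensen step in the upper bound. The harmonic mean $p_A$ controls averages of $1/p$ naturally, whereas the modular involves $|A|^{-p(\omega)/p_A}$, mixing $p$ and $p_A$ transversally to a direct application of Jensen's inequality. Adapting the argument of \cite[Lemma 4.5.3]{Diening3} alluded to at the start of this section to the abstract probabilistic setting, where no metric structure is available to supply the log-Hölder control normally exploited, is the delicate part and will force the constants in the equivalence to depend on $p(\cdot)$.
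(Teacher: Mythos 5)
Your upper bound is where the argument breaks, and since your lower bound is derived from it (by applying the upper bound to $p'$), both halves collapse together. You propose to show $\rho_{p(\cdot)}(\chi_A/\gamma)\le 1$ for $\gamma=c|A|^{1/p_A}$ via Jensen's inequality, but Jensen applied to the convex function $t\mapsto|A|^{-t}$ bounds the average $\frac{1}{|A|}\int_A|A|^{-p(\omega)/p_A}\,d\mu$ from \emph{below} (and in terms of the arithmetic mean of $p$, not the harmonic mean), which is the wrong direction for your purpose. Worse, the inequality you are trying to prove by this route is simply false as a modular statement: take $p=1$ on one half of $A$ and $p=2$ on the other half, with $|A|=\varepsilon$; then $1/p_A=3/4$ and
\begin{equation*}
\rho_{p(\cdot)}\Bigl(\frac{\chi_A}{c\,\varepsilon^{3/4}}\Bigr)
=\frac{\varepsilon}{2}\Bigl(\frac{\varepsilon^{-3/4}}{c}+\frac{\varepsilon^{-3/2}}{c^{2}}\Bigr)
=\frac{\varepsilon^{1/4}}{2c}+\frac{\varepsilon^{-1/2}}{2c^{2}}\longrightarrow\infty
\quad(\varepsilon\to 0),
\end{equation*}
so no constant $c=c(p(\cdot))$ forces the modular below $1$ uniformly in $A$. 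No pointwise/modular manipulation can close this step; the upper bound is exactly where the paper injects an operator-theoretic input that your proposal never invokes, namely Lemma \ref{Lem3.2} (uniform boundedness of the averaging operators $T_A$ on $L^{p(\cdot)}$, resting on the weak-type Doob inequality \eqref{eq3.2}). The paper tests $T_A$ against $u=|A|^{-1/p(\cdot)}\chi_A$ --- note the \emph{pointwise} exponent $1/p(\omega)$, for which $\rho_{p(\cdot)}(u)=\int_A|A|^{-1}\,d\mu=1$ exactly, no Jensen required --- and then combines $\|T_Au\|_{p(\cdot)}=\|\chi_A\|_{p(\cdot)}\cdot\frac{1}{|A|}\int_A|A|^{-1/p}\,d\mu\le C$ with Jensen in the favourable direction, $\frac{1}{|A|}\int_A|A|^{-1/p}\,d\mu\ge|A|^{-1/p_A}$, to get \eqref{eq3.7}.

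Your duality idea for the lower bound is sound and close in spirit to the paper's, but the paper makes it self-contained instead of contingent on the upper bound: it pairs $\chi_A$ in H\"older's inequality \eqref{eq1.5} with $v=|A|^{-1/p'(\cdot)}\chi_A$, whose $L^{p'(\cdot)}$-norm is at most $1$ by the same exact modular computation, and then uses Jensen (again in the direction it actually goes) to bound $\int_Av\,d\mu$ from below by $|A|\cdot|A|^{-1/p'_A}=|A|^{1/p_A}$. Your reduction to $p^->1$ via $\|\chi_A\|_{sp(\cdot)}=\|\chi_A\|_{p(\cdot)}^{1/s}$ and $(sp)_A=sp_A$ matches the paper's Step 2 and is fine. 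In short: replace the Jensen attack on the modular by Lemma \ref{Lem3.2} and choose your test functions with the pointwise exponent $1/p(\omega)$ rather than the averaged exponent $1/p_A$.
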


\begin{proof}[Proof of Theorem \ref{Thm3.3}] We split the proof into two steps.

$Step ~1.$ Let $p\geq1.$ Setting $\varphi_p(t)=t^p,$ we regard
$\varphi_p(t)$ as a function of two variables $p$ and $t.$ For a
fixed $t\geq0,$ $\varphi_p(t)$ is a convex function of $p.$ Defining
$$\varphi_p^{-1}(t)=\inf \{s\geq0, ~ \varphi_p(s)\geq t\},$$ we have
$\varphi^{-1}_p(t)=t^{\frac{1}{p}}.$ Let $p'$ be $p$'s conjugate exponent. Setting
$\varphi_{p'}(t)=t^{p'},$ we have
$\varphi_{p'}^{-1}(t)=t^{\frac{1}{p'}}$ with
$t^\infty=\infty\chi_{(1,\infty)}(t)$ for given $t\geq0.$ It follows
that
\begin{equation}\label{eq3.4}
\varphi_p(\varphi_p^{-1}(t))\leq t, ~~
\varphi^{-1}_p(t)\varphi_p^{-1}(\frac{1}{t})=1, ~~
\varphi^{-1}_p(t)\varphi_{p'}^{-1}(t)\geq t.
\end{equation}
For $t>0$ and $|A|>0,$ we have
\begin{equation}\label{eq3.5}\frac{1}{|A|}\int_A\varphi_p^{-1}(t)d\mu\geq \frac{t}{|A|}\int_A\frac{1}{\varphi_{p'}^{-1}(t)}d\mu
\geq
\frac{t}{\frac{1}{|A|}\int_A\varphi_{p'}^{-1}(t)d\mu},\end{equation}
where we have used Jensen's inequality and the
convexity of $z\rightarrow \frac{1}{z}.$

Let $u =\varphi_p^{-1}(\frac{1}{|A|})\chi_A, ~v
=\varphi_{p'}^{-1}(\frac{1}{|A|})\chi_A. $ It is easy to check that
$\rho_p(u)\leq1, \rho_{p'}(v)\leq1.$ Then
$\|u\|_{p(\cdot)}\leq1, \|v\|_{p'(\cdot)}\leq1.$
Applying \eqref{eq1.5}, we have
$$\int_A\varphi_{p'}^{-1}(\frac{1}{|A|})d\mu
= \int_A v d\mu\leq C\|\chi_A\|_{p(\cdot)}\|v\|_{p'(\cdot)}
\leq C\|\chi_A\|_{p(\cdot)}.$$
Let $t=|A|.$ Then it follows from \eqref{eq3.4} and the convexity of
$\varphi_{p'}^{-1}(\frac{1}{t})$ that
\begin{eqnarray}
 |A|^{\frac{1}{p_A}} &=& \varphi_{p_A}^{-1}(|A|) \leq
\frac{1}{\varphi_{p_A}^{-1}(\frac{1}{|A|})}\leq
|A|\varphi_{p'_A}^{-1}(\frac{1}{|A|})\nonumber \\
& \leq & \int_A\varphi_{p'}^{-1}(\frac{1}{|A|})d\mu\leq
C\|\chi_A\|_{p(\cdot)}.\label{eq3.6}
\end{eqnarray}
On the other hand, using Lemma \ref{Lem3.2},
we have
\begin{eqnarray*}
 \|\chi_A\|_{p(\cdot)}\frac{1}{|A|}\int_A\varphi_p^{-1}(\frac{1}{|A|})d\mu
     = \|\frac{1}{|A|} \int_A u d\mu \chi_A \|_{p(\cdot)}
= \|T_A u\|_{p(\cdot)}\leq C \|u\|_{p(\cdot)}\leq C.
\end{eqnarray*}
It follows from \eqref{eq3.4} and the convexity
of $\varphi_{p}^{-1}(\frac{1}{t})$ that
\begin{equation}\label{eq3.7}
\|\chi_A\|_{p(\cdot)}\leq \frac{C}{\frac{1}{|A|}\int_A\varphi_p^{-1}(\frac{1}{|A|})d\mu }
\leq \frac{C}{\varphi_{p_A}^{-1}(\frac{1}{|A|})}
\leq C \varphi_{p_A}^{-1}(|A|)=C |A|^{\frac{1}{p_A}}.
\end{equation}
Combining this with \eqref{eq3.6}, we obtain \eqref{eq3.3} for $p\geq1.$

$Step ~2.$ For $p \in L_+^0(\Omega)$ with $p^->0,$ let
 $q=p/p^-.$ By Step 1, we have $\|\chi_A\|_{q(\cdot)}\sim |A|^{\frac{1}{q_A}}.$ It follows from
Lemmas \ref{Lem3.1}\eqref{Lem3.1a} and \ref{Lem1.3}\eqref{Lem1.3a} that
$$|A|^{\frac{1}{p_A}}=|A|^{\frac{1}{p^-q_A}}
=(|A|^{\frac{1}{q_A}})^{\frac{1}{p^-}}\sim
\|\chi_A\|^{\frac{1}{p^-}}_{q(\cdot)}
=\|\chi_A\|^{1/{p^-}}_{p(\cdot)/p^-}=\|\chi_A\|_{p(\cdot)}.$$
\end{proof}

\begin{theorem} \label{Thm3.4}Let $p,q,r \in  L_+^0(\Omega).$
\begin{enumerate}
  \item \label{Thm3.4a} If $p\geq1$, then
\begin{equation}\label{eq3.8} \|\chi_A\|_{p(\cdot)}\|\chi_A\|_{p'(\cdot)}\sim |A|,\end{equation}
where $p'$ is $p$'s conjugate exponent;
  \item \label{Thm3.4b} If $\frac{1}{r}=\frac{1}{p}+\frac{1}{q}$ with $r^->0,$ then
\begin{equation}\label{eq3.9}
\|\chi_A\|_{r(\cdot)}\sim
\|\chi_A\|_{p(\cdot)}\|\chi_A\|_{q(\cdot)}.
\end{equation}
\end{enumerate}
\end{theorem}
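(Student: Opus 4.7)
The plan is to derive both parts as direct consequences of Theorem \ref{Thm3.3}, combined with the additivity of the reciprocal harmonic mean given by Lemma \ref{Lem3.1}. The guiding principle is that since $\|\chi_A\|_{p(\cdot)}$ is captured up to constants by $|A|^{1/p_A}$, and since $1/p_A$ behaves linearly under pointwise reciprocal identities, any product/quotient relation among exponents translates into the corresponding relation among the norms of $\chi_A$.

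For part \eqref{Thm3.4a}, I would first observe that the pointwise identity $\frac{1}{p(\omega)} + \frac{1}{p'(\omega)} = 1$ integrates, after dividing by $|A|$, to
\begin{equation*}
\frac{1}{p_A} + \frac{1}{p'_A} = 1.
\end{equation*}
Then by Theorem \ref{Thm3.3} applied to both $p$ and $p'$ (both have positive essential infimum since $p \geq 1$, hence $p' \geq 1$), I multiply:
\begin{equation*}
\|\chi_A\|_{p(\cdot)}\|\chi_A\|_{p'(\cdot)} \sim |A|^{1/p_A} \cdot |A|^{1/p'_A} = |A|^{1/p_A + 1/p'_A} = |A|.
\end{equation*}

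For part \eqref{Thm3.4b}, I would first verify the hypotheses of Theorem \ref{Thm3.3} for $p$ and $q$: since $\frac{1}{p} = \frac{1}{r} - \frac{1}{q} \leq \frac{1}{r}$, we have $p \geq r$ and hence $p^- \geq r^- > 0$; likewise $q^- > 0$. Then Lemma \ref{Lem3.1}\eqref{Lem3.1b} gives $\frac{1}{r_A} = \frac{1}{p_A} + \frac{1}{q_A}$, and three applications of Theorem \ref{Thm3.3} yield
\begin{equation*}
\|\chi_A\|_{r(\cdot)} \sim |A|^{1/r_A} = |A|^{1/p_A}\cdot|A|^{1/q_A} \sim \|\chi_A\|_{p(\cdot)}\|\chi_A\|_{q(\cdot)}.
\end{equation*}

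The only real subtlety lies in part \eqref{Thm3.4a}, where $p'$ may equal $\infty$ on the set $\{p = 1\}$. I would handle this by noting that the proof of Theorem \ref{Thm3.3} is formulated using $\varphi_{p'}^{-1}(t) = t^{1/p'}$ with the convention $t^{\infty} = \infty\,\chi_{(1,\infty)}(t)$ and by recalling that the modular $\rho_{p'(\cdot)}$ in \eqref{eq1.1} already incorporates an $L^\infty$ contribution on $\{p' = \infty\}$. Hence the estimate $\|\chi_A\|_{p'(\cdot)} \sim |A|^{1/p'_A}$ remains valid even when $p'$ is infinite on a subset of positive measure, with the convention $1/\infty = 0$ in the definition of $p'_A$. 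No further difficulty arises; both statements are essentially bookkeeping from Theorem \ref{Thm3.3} and Lemma \ref{Lem3.1}.
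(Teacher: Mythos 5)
Your proposal is correct and follows essentially the same route as the paper: both parts are obtained by applying Theorem \ref{Thm3.3} to each exponent and then invoking the additivity of the reciprocal harmonic means ($\frac{1}{p_A}+\frac{1}{p'_A}=1$ and Lemma \ref{Lem3.1}\eqref{Lem3.1b}). Your extra checks --- that $p^-\geq r^->0$ and $q^->0$ in part \eqref{Thm3.4b}, and the discussion of $p'=\infty$ on $\{p=1\}$ --- are sound and slightly more careful than the paper's two-line computation.
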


\begin{proof}[Proof of Theorem \ref{Thm3.4}] Using Lemmas \ref{Lem3.2} and \ref{Lem3.1}, we have
$$\|\chi_A\|_{p(\cdot)}\|\chi_A\|_{p'(\cdot)}\sim |A|^{\frac{1}{p_A}}|A|^{\frac{1}{p'_A}}=|A|^{\frac{1}{p_A}+\frac{1}{p'_A}}=|A|$$
and
$$\|\chi_A\|_{r(\cdot)}\sim |A|^{\frac{1}{r_A}}
= |A|^{\frac{1}{p_A}+\frac{1}{q_A}}
=|A|^{\frac{1}{p_A}}|A|^{\frac{1}{q_A}}\sim \|\chi_A\|_{p(\cdot)}\|\chi_A\|_{q(\cdot)}.$$
\end{proof}

\section{Some inequalities for spaces
with $0<p^-\leq p^+\leq \eta$}\label{sec005}

In this section, using the atomic decompositions and the harmonic
mean, we prove several martingale inequalities of the Hardy spaces
with small exponents $0<p^-\leq p^+\leq \eta,$ where
$\eta$ is the constant in Theorem \ref{lem00}.

\begin{theorem}\label{Thm4.1} If $p\in \mathcal{P}$ with $p^+\leq \eta,$ then
\begin{equation}\label{eq4.1}
\|f\|_{H^*_{p(\cdot)}}\lesssim \|f\|_{H^s_{p(\cdot)}},
\|f\|_{H^S_{p(\cdot)}}\lesssim \|f\|_{H^s_{p(\cdot)}},
\forall f=(f_n).
\end{equation}
In particular,
$$\|S^2(f)\|_{p(\cdot)}\lesssim  \|s^2(f)\|_{p(\cdot)}, \forall f=(f_n). $$
\end{theorem}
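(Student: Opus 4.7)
My plan is to reduce both inequalities to an atom-level estimate via the atomic decomposition of Theorem \ref{Thm2.1}. Writing $f=\sum_{k\in Z}\theta_k a^k$ with $(\sum_k\theta_k^{p^+})^{1/p^+}\lesssim\|f\|_{H^s_{p(\cdot)}}$, where each $a^k$ is a $p$-$A_1$ atom associated to a stopping time $\tau_k$ with $A_k=\{\tau_k<\infty\}$, the condition $E_n a^k=0$ on $\{n\le\tau_k\}$ forces $d_n a^k=0$ off $A_k$, so that both $(a^k)^*$ and $S(a^k)$ are supported on $A_k$. Since the operators $g\mapsto g^*$ and $S$ are subadditive, $f^*\le\sum_k\theta_k(a^k)^*$ and $S(f)\le\sum_k\theta_k S(a^k)$ pointwise, and it suffices to show that each atom satisfies $\|(a^k)^*\|_{p(\cdot)}+\|S(a^k)\|_{p(\cdot)}\lesssim 1$.

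For the atom-level estimate I would combine a classical $L^2$ bound with variable-exponent H\"older. The identity $E|S(a^k)|^2=E|s(a^k)|^2$ together with Doob's $L^2$ inequality gives
\[
 \|(a^k)^*\|_2+\|S(a^k)\|_2\;\lesssim\;\|s(a^k)\|_\infty\,|A_k|^{1/2}\;\le\;\|\chi_{A_k}\|_{p(\cdot)}^{-1}\,|A_k|^{1/2}.
\]
Since $p^+\le\eta\le 1<2$, I can define $r(\cdot)$ by $\tfrac{1}{p(\cdot)}=\tfrac12+\tfrac{1}{r(\cdot)}$, and Lemma \ref{Lem1.3}\eqref{Lem1.3b} yields
\[
 \|(a^k)^*\|_{p(\cdot)}=\|(a^k)^*\chi_{A_k}\|_{p(\cdot)}\lesssim\|(a^k)^*\|_2\,\|\chi_{A_k}\|_{r(\cdot)}.
\]
Using Theorem \ref{Thm3.3} together with Lemma \ref{Lem3.1}\eqref{Lem3.1b} (which gives $\tfrac{1}{r_{A_k}}=\tfrac{1}{p_{A_k}}-\tfrac12$), we obtain $\|\chi_{A_k}\|_{r(\cdot)}\sim|A_k|^{1/p_{A_k}-1/2}$ and $\|\chi_{A_k}\|_{p(\cdot)}\sim|A_k|^{1/p_{A_k}}$; these factors cancel exactly, giving $\|(a^k)^*\|_{p(\cdot)}\lesssim 1$, and analogously $\|S(a^k)\|_{p(\cdot)}\lesssim 1$.

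To assemble the atoms I would invoke the Aoki--Rolewicz $\eta$-subadditivity from Remark \ref{rk01} (available since $p^+\le\eta$):
\[
 \|f^*\|_{p(\cdot)}^\eta\;\lesssim\;\sum_{k\in Z}\theta_k^\eta\,\|(a^k)^*\|_{p(\cdot)}^\eta\;\lesssim\;\sum_{k\in Z}\theta_k^\eta.
\]
The same hypothesis $p^+\le\eta$ yields the elementary $\ell^{p^+}\hookrightarrow\ell^\eta$ embedding $\sum_k\theta_k^\eta\le(\sum_k\theta_k^{p^+})^{\eta/p^+}$, which together with Theorem \ref{Thm2.1} completes $\|f^*\|_{p(\cdot)}\lesssim\|f\|_{H^s_{p(\cdot)}}$; the bound for $S(f)$ is identical. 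For the ``in particular'' clause I would invoke Lemma \ref{Lem1.3}\eqref{Lem1.3a} to rewrite $\|S^2(f)\|_{p(\cdot)}=\|S(f)\|_{2p(\cdot)}^2$ and $\|s^2(f)\|_{p(\cdot)}=\|s(f)\|_{2p(\cdot)}^2$, and then apply the inequality just proved at exponent $2p(\cdot)$, whose own Aoki--Rolewicz parameter absorbs the doubling.

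The main obstacle is the atom-level estimate in the second step: the precise cancellation between the $\|\chi_{A_k}\|_{p(\cdot)}^{-1}$ coming from the $L^2$ atomic bound and the $\|\chi_{A_k}\|_{r(\cdot)}$ factor appearing after H\"older hinges on the harmonic-mean identity $\tfrac{1}{r_{A_k}}=\tfrac{1}{p_{A_k}}-\tfrac12$ from Section \ref{sec3}. This is exactly the place where the variable-exponent machinery developed earlier becomes indispensable and classical constant-exponent arguments are insufficient.
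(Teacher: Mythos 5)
Your proposal is correct and follows essentially the same route as the paper's own proof: atomic decomposition via Theorem \ref{Thm2.1}, an atom-level bound obtained from variable-exponent H\"older with $\frac1p=\frac12+\frac1q$, the classical Burkholder--Gundy--Davis/$L^2$ identities, and the cancellation $\|\chi_{A_k}\|_2\|\chi_{A_k}\|_{q(\cdot)}\sim\|\chi_{A_k}\|_{p(\cdot)}$, followed by Aoki--Rolewicz $\eta$-summation and the $\ell^{p^+}\hookrightarrow\ell^\eta$ embedding. The only cosmetic difference is that you unwind Theorem \ref{Thm3.4}\eqref{Thm3.4b} into Theorem \ref{Thm3.3} plus Lemma \ref{Lem3.1}\eqref{Lem3.1b}, and your treatment of the ``in particular'' clause (rescaling the exponent by a factor of $2$ and appealing to Lemma \ref{Lem1.3}\eqref{Lem1.3a}) is exactly the paper's, sharing its same terseness about the admissible exponent range.
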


\begin{proof}[Proof of Theorem \ref{Thm4.1}] Let $f\in H^s_{p(\cdot)}.$ It follows from Theorem \ref{Thm2.1} that $f$
has an atomic decomposition satisfying \eqref{eq2.1} and \eqref{eq2.2}.

For every $p-A_1$ atom $a^k,$ we estimate the quasi-norms of
$a^{k*}$ and $S(a^k)$ in $L^{p(\cdot)}.$ For $p$ with
$\frac{1}{p}=\frac{1}{2}+\frac{1}{q},$ we have $p\leq q\leq2.$ Using \eqref{eq1.5},
\eqref{eq2.5}, Theorem \ref{Thm3.4}\eqref{Thm3.4b} and classical
Burkholder-Gundy-Davis inequality, we have
\begin{eqnarray}
\|a^{k*}\|_{p(\cdot)}& =& \|a^{k*}\chi_{A_k}\|_{p(\cdot)}
        \leq C\|a^{k*}\|_2\|\chi_{A_k}\|_{q(\cdot)}\nonumber\\
&\leq & C\|S(a^k)\|_2\|\chi_{A_k}\|_{q(\cdot)}=C\|s(a^k)\|_2\|\chi_{A_k}\|_{q(\cdot)}\nonumber\\
&\leq&\frac{C\|\chi_{A_k}\|_2\|\chi_{A_k}\|_{q(\cdot)}}{\|\chi_{A_k}\|_{p(\cdot)}}\leq
C. \label{eq4.2}
\end{eqnarray}
Similarly,
\begin{eqnarray}
\|S(a^k)\|_{p(\cdot)} &=& \|S(a^k)\chi_{A_k}\|_{p(\cdot)}\leq C\|S(a^k)\|_2\|\chi_{A_k}\|_{q(\cdot)}\nonumber\\
&=& C\|s(a^k)\|_2\|\chi_{A_k}\|_{q(\cdot)}\nonumber\\
&\leq&\frac{C\|\chi_{A_k}\|_2\|\chi_{A_k}\|_{q(\cdot)}}{\|\chi_{A_k}\|_{p(\cdot)}}\leq C.\label{eq4.3}
\end{eqnarray}

It follows from \eqref{eq2.1} and \eqref{eq1.6} that $f^*\leq
\sum_{k\in Z}\theta_k a^{k*}$ and
\begin{eqnarray*}
\|f^*\|^\eta_{p(\cdot)}&\leq & \sum_{k\in Z} \|\theta_k a^{k*}\|^\eta_{p(\cdot)}\leq \sum_{k\in Z} \theta_k^\eta\| a^{k*}\|^\eta_{p(\cdot)}\\
                       &\leq& C\sum_{k\in Z}\theta_k^\eta
                       \leq C(\sum_{k\in Z}\theta_k^{p^+})^\frac{\eta}{p^+}.
\end{eqnarray*}
Combining this with \eqref{eq2.2}, we get
$$\|f^*\|_{p(\cdot)}\leq C\| f \|_{H^s_{p(\cdot)}}.$$ Similarly, we have
$$\|S(f)\|_{p(\cdot)}\leq C \|f\|_{H^s_{p(\cdot)}}.$$

Taking $p/2$ instead of $p$ in the last inequality and using Lemma
\ref{Lem1.3}\eqref{Lem1.3a}, we get 
\begin{equation*}\|S^2(f)\|_{p(\cdot)}\leq C
\|s^2(f)\|_{p(\cdot)}.\end{equation*}
\end{proof}

\begin{theorem} \label{Thm4.2}If $p\in \mathcal{P}$ with $p^+\leq\eta,$ then
\begin{equation}\label{eq4.4}\| f \|_{H^*_{p(\cdot)}} \lesssim  \| f \|_{\mathcal{Q}_{p(\cdot)}},
~~\| f \|_{H^S_{p(\cdot)}} \lesssim  \|f\|_{\mathcal{D}_{p(\cdot)}},
~~\forall f=(f_n).
\end{equation}\end{theorem}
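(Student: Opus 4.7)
The plan is to mirror the proof of Theorem \ref{Thm4.1} almost line for line, invoking Theorem \ref{Thm2.2} for the atomic decomposition in place of Theorem \ref{Thm2.1}. I will describe the first inequality in detail; the second is symmetric upon interchanging $(\cdot)^*$ with $S(\cdot)$ and $p-A_2$ atoms with $p-A_3$ atoms.

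Given $f\in\mathcal{Q}_{p(\cdot)}$, Theorem \ref{Thm2.2} provides $\theta_k\geq 0$ and $p-A_2$ atoms $a^k$ with $f_n=\sum_k\theta_k E_n a^k$ and $(\sum_k\theta_k^{p^+})^{1/p^+}\lesssim\|f\|_{\mathcal{Q}_{p(\cdot)}}$. From the construction $a^k_n=\theta_k^{-1}(f^{\tau_{k+1}}_n-f^{\tau_k}_n)$, each $a^k$ is supported on $A_k=\{\tau_k<\infty\}$, hence so are $a^{k*}$ and $S(a^k)$. Choosing $q$ with $\frac{1}{p}=\frac{1}{2}+\frac{1}{q}$ (permitted since $p^+\leq\eta\leq 1<2$), the same three-step estimate used in \eqref{eq4.2} gives
$$\|a^{k*}\|_{p(\cdot)}=\|a^{k*}\chi_{A_k}\|_{p(\cdot)}\lesssim \|a^{k*}\|_2\,\|\chi_{A_k}\|_{q(\cdot)}\lesssim \|S(a^k)\|_\infty\,\|\chi_{A_k}\|_2\,\|\chi_{A_k}\|_{q(\cdot)}\lesssim 1,$$
where I used Lemma \ref{Lem1.3}\eqref{Lem1.3b} for H\"older, the classical $L^2$ Burkholder-Gundy-Davis inequality $\|a^{k*}\|_2\lesssim\|S(a^k)\|_2$, the atom bound $\|S(a^k)\|_\infty\leq\|\chi_{A_k}\|^{-1}_{p(\cdot)}$, and finally Theorem \ref{Thm3.4}\eqref{Thm3.4b} to absorb the three factors of $\|\chi_{A_k}\|$.

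Since $f^*\leq\sum_k\theta_k a^{k*}$, the Aoki-Rolewicz $\eta$-subadditivity (Remark \ref{rk01}) together with the hypothesis $p^+\leq\eta$ (which gives $\sum_k\theta_k^\eta\leq(\sum_k\theta_k^{p^+})^{\eta/p^+}$ for nonnegative sequences, by monotonicity of $\ell^r$ quasi-norms) yields
$$\|f^*\|^\eta_{p(\cdot)}\leq\sum_k\theta_k^\eta\|a^{k*}\|^\eta_{p(\cdot)}\lesssim\sum_k\theta_k^\eta\leq\Big(\sum_k\theta_k^{p^+}\Big)^{\eta/p^+}\lesssim\|f\|^\eta_{\mathcal{Q}_{p(\cdot)}},$$
as required. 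For the second inequality the same recipe applies: every $p-A_3$ atom $a^k$ satisfies $\|a^{k*}\|_\infty\leq\|\chi_{A_k}\|^{-1}_{p(\cdot)}$, so by classical BDG $\|S(a^k)\|_2\sim\|a^{k*}\|_2\leq\|\chi_{A_k}\|^{-1}_{p(\cdot)}\|\chi_{A_k}\|_2$, leading by the identical chain to $\|S(a^k)\|_{p(\cdot)}\lesssim 1$; Minkowski's inequality in $\ell^2$ delivers $S(f)\leq\sum_k\theta_k S(a^k)$, and the $\eta$-subadditivity closes the argument. The only step deserving explicit care is the support property $\supp a^k\subset A_k$, which is immediate from the stopping-time construction of the atoms but is indispensable for introducing the cut-off $\chi_{A_k}$ before applying H\"older; beyond this, everything runs in perfect parallel to Theorem \ref{Thm4.1}.
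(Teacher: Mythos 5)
Your proposal is correct and follows essentially the same route as the paper: the atomic decomposition of Theorem \ref{Thm2.2}, the estimate $\|a^{k*}\|_{p(\cdot)}\lesssim 1$ via H\"older with $\tfrac1p=\tfrac12+\tfrac1q$, the classical $L^2$ Burkholder--Gundy--Davis inequality, the atom bound, and Theorem \ref{Thm3.4}\eqref{Thm3.4b}, followed by the $\eta$-subadditivity and the embedding $\ell^{p^+}\hookrightarrow\ell^{\eta}$. Your phrasing of the key atom estimate directly through $\|S(a^k)\|_\infty\leq\|\chi_{A_k}\|_{p(\cdot)}^{-1}$ is in fact slightly cleaner than the paper's intermediate step involving $\lambda_\infty$.
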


\begin{proof}[Proof of Theorem \ref{Thm4.2}] Let $f\in
\mathcal{Q}_{p(\cdot)}.$ It follows from Theorem \ref{Thm2.2} that $f$ has an atomic decomposition satisfying
\eqref{eq2.6} and \eqref{eq2.7}. For every $p-A_2$ atom
$a^k,$ we estimate the quasi-norm of
$a^{k*}$ in $L^{p(\cdot)}.$ For $p$ with
$\frac{1}{p}=\frac{1}{2}+\frac{1}{q},$ we have $p\leq q\leq2.$
Using \eqref{eq1.5}, Theorem \ref{Thm3.4}\eqref{Thm3.4b} and classical
Burkholder-Gundy-Davis inequality,  we obtain
\begin{eqnarray}\|a^{k*}\|_{p(\cdot)}&=&\|a^{k*}\chi_{A_k}\|_{p(\cdot)}\leq C\|a^{k*}\|_2\|\chi_{A_k}\|_{q(\cdot)}\nonumber\\
&\leq& C\|S(a^k)\chi_{A_k}\|_2\|\chi_{A_k}\|_{q(\cdot)}\nonumber\\
&\leq& C\|\lambda_\infty\chi_{A_k}\|_2\|\chi_{A_k}\|_{q(\cdot)}\nonumber\\
&\leq&\frac{C\|\chi_{A_k}\|_2\|\chi_{A_k}\|_{q(\cdot)}}{\|\chi_{A_k}\|_{p(\cdot)}}\leq C,\label{eq4.5}
\end{eqnarray}
where $\lambda$ is $f$'s optimal predictable
control in $\mathcal Q_{p(\cdot)}.$

It follows from \eqref{eq2.6} and \eqref{eq1.6} that $f^*\leq \sum_{k\in Z}\theta_k a^{k*}$ and
\begin{eqnarray*}
\|f^*\|^\eta_{p(\cdot)}&\leq& C\sum_{k\in Z} \|\theta_k a^{k*}\|^\eta_{p(\cdot)}\leq C\sum_{k\in Z} \theta_k^\eta\| a^{k*}\|^\eta_{p(\cdot)}\\
                       &\leq& C\sum_{k\in Z}\theta_k^\eta \leq C(\sum_{k\in Z}\theta_k^{p^+})^\frac{\eta}{p^+}.
\end{eqnarray*}
Combining this with \eqref{eq2.7}, we get $$\| f \|_{H^*_{p(\cdot)}}
\leq  C\| f \|_{\mathcal{Q}_{p(\cdot)}}.$$

Similarly, we get $$\|f\|_{H^S_{p(\cdot)}} \leq
C\|f\|_{\mathcal{D}_{p(\cdot)}}.$$
\end{proof}

\section{Some inequalities in spaces
with $0<p^-\leq p^+<\infty$}\label{sec5}

In the last section we extend some martingale inequalities in
the spaces with $0<p^-\leq p^+\leq \eta$ and the spaces with $1\leq
p^-\leq p^+<\infty$ to the spaces with $0<p^-\leq p^+<\infty.$

\begin{theorem}\label{Thm5.1}
Let $p\in \mathcal{P}.$ Then
\begin{equation}
\label{eq5.2} \| f \|_{H^*_{p(\cdot)}} \lesssim\| f \|_{\mathcal{Q}_{p(\cdot)}}, \forall f=(f_n).
\end{equation}
\end{theorem}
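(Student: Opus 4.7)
The plan is to extend the argument of Theorem \ref{Thm4.2} beyond the small-exponent range $p^+\le \eta$ by using a Lemma \ref{Lem1.3}\eqref{Lem1.3a}-style rescaling that converts the $L^{p(\cdot)}$ quasi-norm into a genuine Banach norm before combining atoms.

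First, I would invoke Theorem \ref{Thm2.2} to obtain an atomic decomposition $f=\sum_k \theta_k a^k$ of $f\in\mathcal Q_{p(\cdot)}$ into $p$-$A_2$ atoms, so that $f^*\le \sum_k \theta_k(a^k)^*$. The single-atom estimate $\|(a^k)^*\|_{p(\cdot)}\le C$ is available from (\ref{eq4.5}); its derivation (H\"older from Lemma \ref{Lem1.3}\eqref{Lem1.3b}, the $L^2$ Burkholder--Gundy inequality, and the harmonic-mean identity in Theorem \ref{Thm3.4}\eqref{Thm3.4b}) is insensitive to the size of $p^+$ and carries over verbatim.

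Next, I would fix any $0<r<\min(p^-,1)$, so that $(p/r)^->1$ and $L^{p(\cdot)/r}$ is a genuine Banach space by Lemma \ref{Lem1.2}\eqref{Lem1.2d}. Using Lemma \ref{Lem1.3}\eqref{Lem1.3a}, the elementary inequality $(x+y)^r\le x^r+y^r$ (valid since $r\le 1$), and the triangle inequality in the Banach space $L^{p(\cdot)/r}$, I would write
\[
\|f^*\|_{p(\cdot)}^r=\bigl\||f^*|^r\bigr\|_{p(\cdot)/r}
\le \Bigl\|\sum_k \theta_k^r ((a^k)^*)^r\Bigr\|_{p(\cdot)/r}
\le \sum_k \theta_k^r \|(a^k)^*\|_{p(\cdot)}^r
\lesssim \sum_k \theta_k^r.
\]
This step replaces the $\eta$-subadditivity of the $L^{p(\cdot)}$ quasi-norm used in Theorem \ref{Thm4.2}, which is no longer available when $p^+>\eta$.

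The main obstacle is the final conversion $\sum_k \theta_k^r \lesssim \|f\|_{\mathcal Q_{p(\cdot)}}^r$. The decomposition bound (\ref{eq2.7}) only yields $(\sum_k \theta_k^{p^+})^{1/p^+}\lesssim \|f\|_{\mathcal Q_{p(\cdot)}}$, which is generally too weak when $r<p^+$. To close the gap, I would use the explicit form $\theta_k=3\cdot 2^k\|\chi_{A_k}\|_{p(\cdot)}$ and rewrite $\sum_k \theta_k^r = 3^r\sum_k 2^{kr}\|\chi_{A_k}\|_{p(\cdot)}^r$; then, applying Theorem \ref{Thm3.3} to replace each $\|\chi_{A_k}\|_{p(\cdot)}$ by the harmonic-mean quantity $|A_k|^{1/p_{A_k}}$, and mimicking Step 1 of the proof of Theorem \ref{Thm2.1}, I would directly compare the resulting weighted series to the modular of the layer-cake function $g=\sum_k 2^k\chi_{A_k\setminus A_{k+1}}$, whose $L^{p(\cdot)}$-norm satisfies $\|g\|_{p(\cdot)}\sim \|\lambda_\infty\|_{p(\cdot)}=\|f\|_{\mathcal Q_{p(\cdot)}}$. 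This combinatorial step, absent in the small-exponent case, is where the extra technicalities of the full range $0<p^-\le p^+<\infty$ are absorbed, and it is the part of the argument I expect to require the most care.
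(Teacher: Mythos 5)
Your reduction of the problem to the estimate $\sum_k\theta_k^r\lesssim\|f\|^r_{\mathcal Q_{p(\cdot)}}$ with $0<r<\min(p^-,1)$ is exactly where the argument breaks, and the obstacle you flag is not a technicality that more care can absorb: that inequality is false. Already for constant $p>r$ one has $\theta_k=3\cdot2^k|A_k|^{1/p}$ and $\|f\|^p_{\mathcal Q_{p}}\sim\sum_k 2^{kp}|A_k\setminus A_{k+1}|\sim\sum_k\bigl(2^k|A_k|^{1/p}\bigr)^p$, so the bound you need reads $\sum_k b_k^{r/p}\lesssim\bigl(\sum_k b_k\bigr)^{r/p}$ with $b_k=2^{kp}|A_k|$ and $r/p<1$; taking a predictable control whose level sets satisfy $2^{kp}|A_k|=1/K$ for $k=1,\dots,K$ makes the left side grow like $K^{1-r/p}$ while the right side stays bounded. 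This is precisely why Theorem \ref{Thm4.2} must assume $p^+\le\eta$: the passage from $\sum_k\theta_k^{s}$ to $(\sum_k\theta_k^{p^+})^{s/p^+}$ requires $s\ge p^+$, whereas your $r$ necessarily sits below $p^-$. A secondary problem: the single-atom estimate \eqref{eq4.5} is not insensitive to the size of $p^+$ as you claim, since it defines $q$ by $\frac1p=\frac12+\frac1q$ and uses $p\le q\le2$, which forces $p\le1$ pointwise (and $q>0$ already fails wherever $p>2$).

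The paper closes the gap by an entirely different device (Chevalier's trick). Assuming \eqref{eq5.2} for the exponent $p/2$, it applies that case to the auxiliary martingale $g_n=f_n^2-S_n^2(f)$, whose square function is controlled by $f^*\lambda_\infty$ so that $\|g\|_{\mathcal Q_{p(\cdot)/2}}\lesssim\|f^*\|_{p(\cdot)}\|f\|_{\mathcal Q_{p(\cdot)}}$ by H\"older; since $|f_n|\le|g_n|^{1/2}+S_n(f)$, this yields a quadratic inequality in $\|f\|^{1/2}_{H^*_{p(\cdot)}}$ whose solution gives the claim for $p$. Iterating from the base case $2^{-m}p^+\le\eta$ supplied by Theorem \ref{Thm4.2} covers the full range. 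If you want to keep an atomic-decomposition proof you would need a summation exponent at least $p^+$, which is incompatible with the convexification step you use; some form of exponent-doubling bootstrap appears unavoidable here.
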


\begin{proof}[Proof of Theorem \ref{Thm5.1}] We apply an idea due to Chevalier \cite{Chevalier} (see also Weisz \cite{Weisz1}
) and split the proof into two steps.

$Step~1.$ To prove that if \eqref{eq5.2} holds for $p/2$, then it
also holds for $p.$ For this purpose, we suppose that
\begin{equation}\label{eq5.3}
\|f\|_{H^*_{p(\cdot)/2}} \leq  C\| f \|_{\mathcal{Q}_{p(\cdot)/2}},~~
\forall f\in \mathcal{Q}_{p(\cdot)/2}.
\end{equation}
Let $f\in \mathcal{Q}_{p(\cdot)}.$ We define $g_n=f^2_n-S^2_n(f).$ Then
$dg_n=2f_{n-1}df_n$ and $g=(g_n)$ is a martingale with
$$S_n(g)^2\leq 4f^{*2}_{n-1}S_n(f)^2\leq 4f^{*2}_{n-1}\lambda_{n-1}^2,$$
where $(\lambda_n)$ is a predictable control of $(S_n(f)).$ Using \eqref{eq1.5}, we have
$$\|g\|_{\mathcal{Q}_{p(\cdot)/2}} \leq 2\|f^*\lambda_\infty\|_{p(\cdot)/2}
\leq 2C_1\|f^*\|_{p(\cdot)}\|\lambda_\infty\|_{p(\cdot)}.$$
Thus
\begin{equation}\label{eq5.4}
\|g\|_{\mathcal{Q}_{p(\cdot)/2}}
\leq 2C_1\|f^*\|_{p(\cdot)}\|f\|_{\mathcal{Q}_{p(\cdot)}}.
\end{equation}

Since $f_n^2=g_n+S^2_n(f),$ we have $|f_n| \leq
|g_n|^{\frac{1}{2}}+S_n(f).$ Moreover,
\begin{equation}\label{eq5.5}
\|f\|_{H^*_{p(\cdot)}}\leq K(\|g^{*\frac{1}{2}}\|_{p(\cdot)}+\|S(f)\|_{p(\cdot)})
=K(\|g\|^{\frac{1}{2}}_{H^*_{p(\cdot)/2}}+\|f\|_{\mathcal{Q}_{p(\cdot)}}).
\end{equation}
It is clear that $f\in
\mathcal{Q}_{p(\cdot)}$ implies $f\in \mathcal{Q}_{p(\cdot)/2}.$ Combining \eqref{eq5.3} and
\eqref{eq5.4}, we get
$$\|g\|_{H^*_{p(\cdot)/2}}\leq C\|g\|_{\mathcal{Q}_{p(\cdot)/2}}\leq 2CC_1\|f^*\|_{p(\cdot)}\|f\|_{\mathcal{Q}_{p(\cdot)}}.$$
It follows from \eqref{eq5.5} that
$$\|f\|_{H^*_{p(\cdot)}}-K(\|f\|_{\mathcal{Q}_{p(\cdot)}}+
\sqrt{2CC_1}\|f\|^{\frac{1}{2}}_{H^*_{p(\cdot)}}\|f\|^{\frac{1}{2}}_{\mathcal{Q}_{p(\cdot)}})\leq0,$$
Let $z=\|f\|^{\frac{1}{2}}_{H^*_{p(\cdot)}}.$ Solving the quadratic inequality of $z$, we have
$$\|f\|_{H^*_{p(\cdot)}}\leq K^2(2CC_1+1)^2\|f\|_{\mathcal{Q}_{p(\cdot)}},$$
where $K^2(2CC_1+1)^2$ is a constant depending only on $p.$

$Step~2.$ Let $0<p^-\leq p^+<\infty.$ We take positive integer $m$
such that $2^{-m} p^+\leq \eta.$ It follows from Theorem
\ref{Thm4.2} that
$$\|f\|_{H^*_{2^{-m}p(\cdot)}}\leq C\|f\|_{\mathcal{Q}_{2^{-m}p(\cdot)}},~\forall f=(f_n).$$
Using Step 1, we have
$$\|f\|_{H^*_{2^{-m+1}p(\cdot)}}\leq C\|f\|_{\mathcal{Q}_{2^{-m+1}p(\cdot)}},~\forall f=(f_n).$$
Then by induction, we have
$$\|f\|_{H^*_{p(\cdot)}}\leq C\|f\|_{\mathcal{Q}_{p(\cdot)}},~\forall f=(f_n).$$
\end{proof}

\begin{theorem}\label{Thm5.2} Let $p\in \mathcal{P}.$ Then
\begin{equation}\label{eq5.6}
\|f\|_{H^s_{p(\cdot)}} \lesssim\| f \|_{\mathcal{D}_{p(\cdot)}},~~
\|f\|_{H^S_{p(\cdot)}} \lesssim\| f \|_{\mathcal{D}_{p(\cdot)}},~~~~
\forall f=(f_n).\end{equation}
\end{theorem}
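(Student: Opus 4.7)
The strategy is to mimic Theorem \ref{Thm5.1}: for each of the two inequalities in \eqref{eq5.6} I would introduce an auxiliary martingale $g$ obtained by subtracting a quadratic process from $f^2$, derive a pointwise bound of the (conditional) square function by $\lambda_\infty+\sqrt{g^*}$, and then invoke Theorem \ref{Thm5.1} at the halved exponent $p(\cdot)/2$ to turn the problem into a quadratic inequality in the $L^{p(\cdot)}$-norm under consideration. The quadratic inequality is then solved exactly as in the proof of Theorem \ref{Thm5.1}.

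For $\|f\|_{H^S_{p(\cdot)}}\lesssim\|f\|_{\mathcal{D}_{p(\cdot)}}$, let $\lambda$ be $f$'s optimal $\mathcal{D}_{p(\cdot)}$-control and set $g_n=f_n^2-S_n^2(f)$, so that $dg_n=2f_{n-1}df_n$ makes $g$ a martingale. The identity $S^2(f)=f^2-g$ together with $f^*\leq\lambda_\infty$ gives the pointwise bound $S(f)\leq\lambda_\infty+\sqrt{g^*}$. In the opposite direction $S_{n+1}^2(g)\leq 4\lambda_n^2 S_{n+1}^2(f)$, and since $|d_{n+1}f|\leq 2\lambda_n$, I would verify that $\rho_n:=2\lambda_n S_n(f)+4\lambda_n^2$ is a nondecreasing adapted predictable control of $S(g)$, placing $g\in\mathcal{Q}_{p(\cdot)/2}$ with
\[
\|g\|_{\mathcal{Q}_{p(\cdot)/2}}\leq\|\rho_\infty\|_{p(\cdot)/2}\lesssim\|f\|_{\mathcal{D}_{p(\cdot)}}\|S(f)\|_{p(\cdot)}+\|f\|^2_{\mathcal{D}_{p(\cdot)}}
\]
by Lemma \ref{Lem1.3} (H\"older with $\tfrac{2}{p}=\tfrac{1}{p}+\tfrac{1}{p}$, and the scaling $\|u^2\|_{p(\cdot)/2}=\|u\|^2_{p(\cdot)}$). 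Theorem \ref{Thm5.1} at exponent $p(\cdot)/2$ then gives $\|g^*\|_{p(\cdot)/2}\lesssim\|g\|_{\mathcal{Q}_{p(\cdot)/2}}$, and substituting into $\|S(f)\|_{p(\cdot)}\lesssim\|\lambda_\infty\|_{p(\cdot)}+\|g^*\|^{1/2}_{p(\cdot)/2}$ produces a quadratic inequality in $\|S(f)\|_{p(\cdot)}^{1/2}$ whose resolution, as in the proof of Theorem \ref{Thm5.1}, yields $\|S(f)\|_{p(\cdot)}\lesssim\|f\|_{\mathcal{D}_{p(\cdot)}}$.

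For $\|f\|_{H^s_{p(\cdot)}}\lesssim\|f\|_{\mathcal{D}_{p(\cdot)}}$ I would rerun the scheme with $g_n=f_n^2-s_n^2(f)$, which is a martingale because $s^2(f)$ is $f$'s predictable quadratic variation. Now
\[
dg_n=2f_{n-1}df_n+|d_nf|^2-E_{n-1}|d_nf|^2,
\]
and expanding $|dg_k|^2$ via $(a+b+c)^2\leq 3(a^2+b^2+c^2)$, then summing and using $|d_kf|\leq 2\lambda_{k-1}$, leads to
\[
S_{n+1}^2(g)\lesssim\lambda_n^2\bigl(S_{n+1}^2(f)+s_{n+1}^2(f)\bigr).
\]
From this, $\rho_n:=C\lambda_n(S_n(f)+s_n(f)+4\lambda_n)$ is a predictable control of $S(g)$, and H\"older gives $\|g\|_{\mathcal{Q}_{p(\cdot)/2}}\lesssim\|f\|_{\mathcal{D}_{p(\cdot)}}(\|S(f)\|_{p(\cdot)}+\|s(f)\|_{p(\cdot)})+\|f\|^2_{\mathcal{D}_{p(\cdot)}}$. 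Invoking the just-proved inequality to absorb $\|S(f)\|_{p(\cdot)}$ into $\|f\|_{\mathcal{D}_{p(\cdot)}}$, then Theorem \ref{Thm5.1} to pass from $\mathcal{Q}_{p(\cdot)/2}$ to $H^*_{p(\cdot)/2}$, and finally using $s(f)\leq\lambda_\infty+\sqrt{g^*}$ coming from $f^2=g+s^2(f)$, produces the analogous quadratic inequality in $\|s(f)\|_{p(\cdot)}^{1/2}$ that solves to $\|s(f)\|_{p(\cdot)}\lesssim\|f\|_{\mathcal{D}_{p(\cdot)}}$.

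The main obstacle is the algebraic analysis of $|dg_n|^2$ in the $s$-case: unlike the $S^2(f)$ decomposition, the correction $|d_nf|^2-E_{n-1}|d_nf|^2$ forces both $S^2(f)$ and $s^2(f)$ into the predictable dominator of $S(g)$, and so into $\|g\|_{\mathcal{Q}_{p(\cdot)/2}}$. It is therefore essential to carry out the $H^S$-inequality first so that, in the $H^s$-case, the extraneous $\|S(f)\|_{p(\cdot)}$ can be eliminated in favor of $\|f\|_{\mathcal{D}_{p(\cdot)}}$, leaving a genuine quadratic in $\|s(f)\|_{p(\cdot)}^{1/2}$ that the Theorem \ref{Thm5.1}-style trick can close.
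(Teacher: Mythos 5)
Your argument is correct in outline, but it takes a genuinely different route from the paper's. The paper does not use Chevalier's square trick here at all: it sets $dg_n=df_n/\sqrt{\lambda_{n-1}}$, so that $|g_n|\le 2\sqrt{\lambda_{n-1}}$ places $g\in\mathcal{D}_{2p(\cdot)}$ with $\|g\|_{\mathcal{D}_{2p(\cdot)}}\le 2\|\lambda_\infty^{1/2}\|_{2p(\cdot)}$, while $df_n=\sqrt{\lambda_{n-1}}\,dg_n$ gives $s_n^2(f)\le\lambda_{n-1}s_n^2(g)$ and $S_n^2(f)\le\lambda_{n-1}S_n^2(g)$; H\"older \eqref{eq1.5} then shows that the inequality at exponent $2p$ implies it at exponent $p$, and the downward induction is anchored at $2^mp^-\ge1$ by citing \cite[Theorem 4.4]{Liu1}. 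That transformation treats $s$ and $S$ symmetrically in one line and needs no quadratic bootstrap, at the cost of an external base case and a doubling induction. Your route instead quotes Theorem \ref{Thm5.1} (already established for all of $\mathcal{P}$) at the exponent $p(\cdot)/2$, so it is a direct, self-contained argument with no induction and no appeal to \cite{Liu1}; the price is the messier analysis of $dg_n$ in the $s$-case, where the correction $|d_nf|^2-E_{n-1}|d_nf|^2$ drags $S(f)$ into the predictable control --- which is exactly why your ordering (prove the $H^S$ inequality first, then absorb $\|S(f)\|_{p(\cdot)}$) is essential --- together with the quadratic-inequality step, which, just as in the paper's own proof of Theorem \ref{Thm5.1}, tacitly assumes $\|S(f)\|_{p(\cdot)}<\infty$ (resp.\ $\|s(f)\|_{p(\cdot)}<\infty$) a priori; you should secure this by first running the argument on the truncations $(f_{n\wedge N})_n$, for which $S_N(f)\le 2(N+1)\lambda_\infty$ gives the needed finiteness, and then letting $N\to\infty$. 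Your individual estimates ($dg_n=2f_{n-1}df_n$, the predictable control $S_{n+1}(g)\le 2\lambda_nS_n(f)+4\lambda_n^2$, the bound $S(f)\le\lambda_\infty+\sqrt{g^*}$ via $f^*\le\lambda_\infty$, and the analogous bounds in the $s$-case using $s_{n+1}(f)\le s_n(f)+2\lambda_n$) all check out.
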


\begin{proof}[Proof of Theorem \ref{Thm5.2}]  ~ $Step~1.$ To prove that if \eqref{eq5.6} holds for $2p,$ then it
also holds for $p.$ Let $f\in \mathcal{D}_{p(\cdot)}$ and
$\lambda=(\lambda_n)$ be $f$'s optimal predictable control. we
define $dg_n= \frac{df_n}{\sqrt{\lambda_{n-1}}},~\forall n\geq1.$
Then
$$g_n= \sum_{i=1}^n\frac{f_i-f_{i-1}}{\sqrt{\lambda_{i-1}}}
=\frac{f_n}{\sqrt{\lambda_{n-1}}}+\sum_{i=1}^{n-1}\frac{f_i}{\sqrt{\lambda_{i-1}\lambda_i}}(\sqrt{\lambda_i}-\sqrt{\lambda_{i-1}}),$$
and
\begin{equation}\label{eq5.7}
|g_n|\leq 2\sqrt{\lambda_{n-1}}, ~~ g^*\leq 2\sqrt{\lambda_\infty},
~~ Eg^{*2}\leq 4E\lambda_\infty<\infty.
\end{equation}
Thus $g=(g_n)$ is an
$L^2$-bounded martingale, which converges to
$g_\infty=\sum_{i=1}^\infty\frac{df_i}{\sqrt{\lambda_{i-1}}}$ a.e.
and in $L^2.$ Notice that $df_n= \sqrt{\lambda_{n-1}} dg_n,~ \forall
n\geq1, $ then
\begin{equation}
\label{eq8}s^2_n(f)\leq \lambda_{n-1} s^2_n(g),~~S^2_n(f)\leq \lambda_{n-1} S^2_n(g).
\end{equation}
For $s(f),$ by Lemma \ref{Lem1.4} and \eqref{eq1.5}, we get
$$\|s(f)\|_{p(\cdot)}\leq\|\lambda_\infty^{\frac{1}{2}}s(g)\|_{p(\cdot)}
\leq C\|\lambda_\infty^{\frac{1}{2}}\|_{2p(\cdot)}\|g\|_{H^s_{2p(\cdot)}}.$$
It follows from \eqref{eq5.7} that
\begin{equation}\label{eq5.9}\|s(f)\|_{p(\cdot)}\leq C\|\lambda_\infty^{\frac{1}{2}}\|_{2p(\cdot)}\|g\|_{\mathcal{D}_{2p(\cdot)}}
\leq C\|\lambda_\infty^{\frac{1}{2}}\|^2_{2p(\cdot)}=C\|\lambda_\infty\|_{p(\cdot)}.
\end{equation}
Thus $\|s(f)\|_{H^s_{p(\cdot)}} \leq
C\|f\|_{\mathcal{D}_{p(\cdot)}}.$

Similarly, we have $\|f\|_{H^S_{p(\cdot)}} \leq  C\| f
\|_{\mathcal{D}_{p(\cdot)}}.$

$Step~2.$ Let $0<p^-\leq p^+<\infty.$ We take positive integer $m$
such that $2^{m} p^-\geq 1.$ It follows from Theorem \cite[Theorem
4.4]{Liu1} that
\begin{equation*}
\|f\|_{H^s_{2^{m}p(\cdot)}} \leq  C\| f \|_{\mathcal{D}_{2^{m}p(\cdot)}},
\|f\|_{H^S_{2^{m}p(\cdot)}} \leq  C\| f \|_{\mathcal{D}_{2^{m}p(\cdot)}},
\forall f=(f_n).
\end{equation*}
Using Step 1, we have
\begin{equation*}
\|f\|_{H^s_{2^{m-1}p(\cdot)}} \leq  C\| f \|_{\mathcal{D}_{2^{m-1}p(\cdot)}},
\|f\|_{H^S_{2^{m-1}p(\cdot)}} \leq  C\| f \|_{\mathcal{D}_{2^{m-1}p(\cdot)}},
\forall f=(f_n).
\end{equation*}
Then by induction, we have
$$\|f\|_{H^s_{p(\cdot)}}\leq C\|f\|_{\mathcal{D}_{p(\cdot)}},~~ \|f\|_{H^S_{p(\cdot)}}\leq C\|f\|_{\mathcal{D}_{p(\cdot)}}, ~~~~ \forall f=(f_n).$$
\end{proof}

\begin{theorem}\label{Thm 5.3} Let $p\in \mathcal{P}.$ Then $\mathcal{Q}_{p(\cdot)}\thicksim\mathcal{D}_{p(\cdot)}.$
\end{theorem}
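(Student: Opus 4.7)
The plan is to establish the two-sided embedding $\mathcal{Q}_{p(\cdot)} \hookrightarrow \mathcal{D}_{p(\cdot)}$ and $\mathcal{D}_{p(\cdot)} \hookrightarrow \mathcal{Q}_{p(\cdot)}$ by constructing, in each case, the desired predictable control as a sum of the given predictable control plus a correction term whose $L^{p(\cdot)}$-quasi-norm is controlled by the preceding Theorems \ref{Thm5.1} and \ref{Thm5.2}. No atomic decomposition is needed at this stage; the work has already been done.

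For $\mathcal{Q}_{p(\cdot)}\hookrightarrow \mathcal{D}_{p(\cdot)}$, I would take $f\in\mathcal{Q}_{p(\cdot)}$ with optimal predictable control $\lambda\in\Lambda$, so that $S_n(f)\leq\lambda_{n-1}$ and $\|\lambda_\infty\|_{p(\cdot)}=\|f\|_{\mathcal{Q}_{p(\cdot)}}$. Set $\mu_n=f_n^\ast+\lambda_n$. Since $f_n^\ast$ and $\lambda_n$ are both adapted, nondecreasing and nonnegative, so is $\mu$. The pointwise bound
\[|f_n|\leq |f_{n-1}|+|df_n|\leq f_{n-1}^\ast+S_n(f)\leq f_{n-1}^\ast+\lambda_{n-1}=\mu_{n-1}\]
shows $\mu$ is a predictable control of $|f|$. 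By the quasi-triangle inequality \eqref{eq1.3} and Theorem \ref{Thm5.1},
\[\|f\|_{\mathcal{D}_{p(\cdot)}}\leq\|\mu_\infty\|_{p(\cdot)}\leq K(\|f^\ast\|_{p(\cdot)}+\|\lambda_\infty\|_{p(\cdot)})\lesssim\|f\|_{\mathcal{Q}_{p(\cdot)}}.\]

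For the reverse embedding $\mathcal{D}_{p(\cdot)}\hookrightarrow\mathcal{Q}_{p(\cdot)}$, take $f\in\mathcal{D}_{p(\cdot)}$ with optimal predictable control $\mu$, so $|f_n|\leq\mu_{n-1}$ and $\|\mu_\infty\|_{p(\cdot)}=\|f\|_{\mathcal{D}_{p(\cdot)}}$. Set $\lambda_n=S_n(f)+2\mu_n$. Both summands are adapted, nondecreasing, nonnegative, so $\lambda\in\Lambda$. Using $S_n(f)\leq S_{n-1}(f)+|df_n|$ together with $|df_n|\leq|f_n|+|f_{n-1}|\leq\mu_{n-1}+\mu_{n-2}\leq 2\mu_{n-1}$, we deduce
\[S_n(f)\leq S_{n-1}(f)+2\mu_{n-1}=\lambda_{n-1},\]
so $\lambda$ is a predictable control of $S(f)$. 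Invoking \eqref{eq1.3} and Theorem \ref{Thm5.2},
\[\|f\|_{\mathcal{Q}_{p(\cdot)}}\leq\|\lambda_\infty\|_{p(\cdot)}\leq K(\|S(f)\|_{p(\cdot)}+2\|\mu_\infty\|_{p(\cdot)})\lesssim\|f\|_{\mathcal{D}_{p(\cdot)}}.\]

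There is no serious obstacle: the arguments reduce to the right \emph{choice} of the corrective adapted sequence ($f_n^\ast$ in one direction, $S_n(f)$ in the other), and to the trivial observation that both $f_n^\ast$ and $S_n(f)$ are already adapted and nondecreasing. The only subtlety worth mentioning is the use of the quasi-triangle inequality \eqref{eq1.3} (with its constant $K$), which is harmless since the conclusion is merely an equivalence of quasi-norms up to absolute constants. This is why the result is stated as $\mathcal{Q}_{p(\cdot)}\thicksim\mathcal{D}_{p(\cdot)}$ rather than as an equality.
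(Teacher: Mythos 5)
Your proof is correct and follows essentially the same route as the paper: in each direction you augment the given optimal predictable control by the adapted nondecreasing sequence $f_n^\ast$ (resp.\ $S_n(f)$), verify the predictability pointwise, and then bound the resulting $\|\cdot\|_{p(\cdot)}$-norm via the quasi-triangle inequality together with Theorems \ref{Thm5.1} and \ref{Thm5.2}. The only cosmetic difference is that in the embedding $\mathcal{Q}_{p(\cdot)}\hookrightarrow\mathcal{D}_{p(\cdot)}$ you use the slightly sharper bound $|df_n|\leq S_n(f)\leq\lambda_{n-1}$ where the paper uses $|df_n|\leq 2\lambda_{n-1}$; this changes nothing.
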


\begin{proof}[Proof of Theorem \ref{Thm 5.3}] Let $f=(f_n)\in \mathcal{D}_{p(\cdot)}$
and $\lambda=(\lambda_n)$ be $f$'s optimal predictable control. Then
$$S_n(f)\leq S_{n-1}(f)+|df_n|\leq S_{n-1}(f)+2\lambda_{n-1}.$$
It is clear that $(S_{n-1}(f)+2\lambda_{n-1})_{n\geq0}$ is a
predictable control of $(S_n(f))_{n\geq0}$ and
$f\in \mathcal{Q}_{p(\cdot)}.$ It follows from \eqref{eq5.6} that
$$\|f\|_{\mathcal{Q}_{p(\cdot)}}\leq \|S(f)+2\lambda_\infty\|_{p(\cdot)}
\leq K(\|f\|_{H^S_{p(\cdot)}}+2\|\lambda_\infty\|_{p(\cdot)})
\leq C\|f\|_{\mathcal{D}_{p(\cdot)}}.$$

On the other hand, let $f=(f_n)\in \mathcal{Q}_{p(\cdot)}$ and
$\lambda=(\lambda_n)$ be $f$'s optimal predictable control. Then
$$|f_n|\leq |f_{n-1}|+|df_n|\leq f^*_{n-1}+2\lambda_{n-1}$$ and
$f\in \mathcal{D}_{p(\cdot)}.$ Using \eqref{eq5.2}, we obtain
$$\|f\|_{\mathcal{D}_{p(\cdot)}}\leq \|f^*+2\lambda_\infty\|_{p(\cdot)}
\leq K(\|f\|_{H^*_{p(\cdot)}}+2\|\lambda_\infty\|_{p(\cdot)})
\leq C\|f\|_{\mathcal{Q}_{p(\cdot)}}.$$
\end{proof}

In \cite{Weisz1}, Weisz proved that if $(\Sigma_n)$ is regular, then all the
spaces $H^*_p,~ H^S_p,~ H^s_p, ~ \mathcal{D}_p$ and $\mathcal{Q}_p $
are equivalent when $0<p<\infty.$ Recently, Liu and Wang \cite{Liu1} proved its
variable exponent analogue for the case $1\leq p^-\leq p^+<\infty.$  In the rest of this chapter, we
extend the result to the case $0< p^-\leq p^+<\infty.$ Recall that a
martingale $f=(f_n)$ is previsible, if there is a real number $R>0$
such that
\begin{equation}\label{eq6.10} |df_n|^2\leq RE_{n-1}|df_n|^2, ~~ \forall n\geq 0. \end{equation}
Weisz \cite[Lemma 2.18]{Weisz1} showed that the assumption \eqref{eq6.10}
can be defined with the exponent $p$ instead of $2.$ In addition, Weisz \cite[Proposition 2.19]{Weisz1}
also showed that \eqref{eq6.10} holds for all martingale with the same
constant $R$ if and only if $(\Sigma_n)$ is regular.

\begin{theorem}\label{Thm5.4}  If $p\in \mathcal{P}$ and $(\Sigma_n)$ is regular,
then
$$H^*_{p(\cdot)}\sim H^S_{p(\cdot)}\sim
H^s_{p(\cdot)}\sim \mathcal{D}_{p(\cdot)}\sim\mathcal{Q}_{p(\cdot)}.$$
\end{theorem}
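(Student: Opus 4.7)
The plan is to combine the material of Theorems~\ref{Thm5.1}--\ref{Thm 5.3} with two consequences of regularity, and then supply the one missing direction by a bootstrap argument modeled on the proofs of Theorems~\ref{Thm5.1}--\ref{Thm5.2}. Let $R$ be the constant in \eqref{eq6.10}. Summing \eqref{eq6.10} over $n$ yields the pointwise bound $S(f)\le\sqrt R\,s(f)$. Moreover, since $s_n(f)^2=\sum_{k\le n}E_{k-1}|df_k|^2$ is $\Sigma_{n-1}$-measurable and non-decreasing, $\lambda_{n-1}:=\sqrt R\,s_n(f)$ is already a predictable control of $S_n(f)$. Consequently,
$$\|f\|_{H^S_{p(\cdot)}}\le\sqrt R\,\|f\|_{H^s_{p(\cdot)}}\quad\text{and}\quad\|f\|_{\mathcal{Q}_{p(\cdot)}}\le\sqrt R\,\|f\|_{H^s_{p(\cdot)}}.$$
Combining with Theorems~\ref{Thm5.1}--\ref{Thm 5.3} and the trivial estimate $f^*\le\lambda_\infty$ for any predictable control $\lambda$ of $|f_n|$ (which gives $\|f\|_{H^*_{p(\cdot)}}\le\|f\|_{\mathcal{D}_{p(\cdot)}}$), I obtain the four-way equivalence $\mathcal{D}_{p(\cdot)}\sim\mathcal{Q}_{p(\cdot)}\sim H^s_{p(\cdot)}\sim H^S_{p(\cdot)}$ together with the continuous embedding $H^*_{p(\cdot)}\hookrightarrow\mathcal{D}_{p(\cdot)}$.

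Thus the only inequality still needing proof is $\|f\|_{\mathcal{D}_{p(\cdot)}}\lesssim\|f\|_{H^*_{p(\cdot)}}$. For $p^-\ge 1$ this is \cite[Theorem~4.4]{Liu1}. For general $p\in\mathcal{P}$ I pick $m\in\mathbb{N}$ with $2^m p^-\ge 1$ and descend by applying a Chevalier-type $2p(\cdot)\to p(\cdot)$ step $m$ times. For the descent, assuming the inequality at $2p(\cdot)$, I introduce the auxiliary martingale $g_n:=f_n^2-S_n^2(f)$ (so that $dg_n=2f_{n-1}df_n$) and use the pointwise bound
$$|f_n|\le |g_n|^{1/2}+S_n(f)\le |g_n|^{1/2}+\sqrt R\,s_n(f),$$
whose second term is $\Sigma_{n-1}$-measurable by regularity. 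The induction hypothesis applied to $g$ yields an optimal predictable control $\mu$ with $\|\mu_\infty\|_{p(\cdot)/2}\sim\|g\|_{\mathcal{D}_{p(\cdot)/2}}\lesssim\|g^*\|_{p(\cdot)/2}\le K\bigl(\|f^*\|_{p(\cdot)}^2+R\|s(f)\|_{p(\cdot)}^2\bigr)$, and then $\mu_{n-1}^{1/2}+\sqrt R\,s_n(f)$ becomes a predictable control of $|f_n|$. Using Lemma~\ref{Lem1.3}\eqref{Lem1.3a}, Hölder's inequality \eqref{eq1.5} and the Aoki--Rolewicz inequality \eqref{eq1.6}, I arrive at a quadratic inequality for $\|f\|_{\mathcal{D}_{p(\cdot)}}^{1/2}$ in the spirit of Theorem~\ref{Thm5.1} Step~1.

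I expect the main obstacle to be closing this quadratic inequality through the iteration: the right-hand side naturally mixes $\|f^*\|_{p(\cdot)}$ with $\|s(f)\|_{p(\cdot)}\sim\|f\|_{\mathcal{D}_{p(\cdot)}}$ (via the four-way equivalence of the first paragraph), and one must verify that the quadratic term is controlled so that solving yields $\|f\|_{\mathcal{D}_{p(\cdot)}}\lesssim\|f\|_{H^*_{p(\cdot)}}$ with a constant that survives $m$ successive applications of the descent. Once this is in hand, iterating from the base case at $2^m p(\cdot)$ down to $p(\cdot)$ completes the five-way equivalence.
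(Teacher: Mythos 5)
Your first paragraph contains the decisive gap. From regularity you correctly obtain $S(f)\le\sqrt R\,s(f)$, hence $\|f\|_{H^S_{p(\cdot)}}\lesssim\|f\|_{H^s_{p(\cdot)}}$ and $\|f\|_{\mathcal{Q}_{p(\cdot)}}\lesssim\|f\|_{H^s_{p(\cdot)}}$, and together with Theorems~\ref{Thm5.1}--\ref{Thm 5.3} this closes the cycle $\mathcal{D}_{p(\cdot)}\sim\mathcal{Q}_{p(\cdot)}\sim H^s_{p(\cdot)}$. But for $H^S_{p(\cdot)}$ every inequality you have points the same way: $\|f\|_{H^S_{p(\cdot)}}$ is dominated by $\|f\|_{H^s_{p(\cdot)}}$ and by $\|f\|_{\mathcal{D}_{p(\cdot)}}$, and nothing you list dominates $\|f\|_{\mathcal{Q}_{p(\cdot)}}$ (or $\|f\|_{H^s_{p(\cdot)}}$, or $\|f\|_{\mathcal{D}_{p(\cdot)}}$) by $\|f\|_{H^S_{p(\cdot)}}$. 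So the claimed four-way equivalence is not established; the embedding $H^S_{p(\cdot)}\hookrightarrow\mathcal{Q}_{p(\cdot)}$ is precisely the hard direction, and it is where the paper spends almost all of its effort: regularity gives $S_n(f)\le S_{n-1}(f)+RE_{n-1}S_n(f)$, so one needs $\|\sup_{n}E_{n-1}S_n(f)\|_{p(\cdot)}\lesssim\|S(f)\|_{p(\cdot)}$, which the paper obtains from the convexity lemma of \cite{Liu1} for $p^-\ge1$ and then transfers to $p^-<1$ by the rescaling trick of Lemma~\ref{lemma6.5} (apply the lemma with $q=p^-$ and exponent $p/p^-$). The paper proves $H^*_{p(\cdot)}\hookrightarrow\mathcal{D}_{p(\cdot)}$ by the same mechanism, not by a Chevalier bootstrap.

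Your proposed bootstrap for $\|f\|_{\mathcal{D}_{p(\cdot)}}\lesssim\|f\|_{H^*_{p(\cdot)}}$ also does not work as written, for two reasons. First, the direction of the induction is inconsistent: your base case sits at the large exponent $2^mp$ with $2^mp^-\ge1$, so each step must pass from $2q$ down to $q$; but the gadget $g_n=f_n^2-S_n^2(f)$ relates $f$ at exponent $q$ to $g$ at exponent $q/2$, i.e.\ it is the \emph{ascent} device of Theorem~\ref{Thm5.1} (whose base case is at the small exponent $2^{-m}p^+\le\eta$), and indeed you invoke the hypothesis at $p(\cdot)/2$ while assuming it at $2p(\cdot)$. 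Second, and more fundamentally, the obstacle you flag at the end is fatal rather than technical: your predictable control of $|f_n|$ is $\mu_{n-1}^{1/2}+\sqrt R\,s_n(f)$, so the resulting bound has an additive term $\|s(f)\|_{p(\cdot)}\sim\|f\|_{\mathcal{D}_{p(\cdot)}}$ with a constant that is not small --- the very quantity you are trying to bound reappears on the right and cannot be absorbed. In Theorem~\ref{Thm5.1} the analogous extra term is $S_n(f)\le\lambda_{n-1}$, i.e.\ the quantity on the \emph{right}-hand side of the target inequality, which is why the quadratic inequality closes there and not here. To repair the proof you should drop the bootstrap entirely and run the paper's argument: $|f_n|\le f^*_{n-1}+RE_{n-1}|df_n|$ together with the estimate for $\|\sup_nE_{n-1}(\,\cdot\,)\|_{p(\cdot)}$ from Lemma~\ref{lemma6.5}, and do the same for $H^S_{p(\cdot)}\hookrightarrow\mathcal{Q}_{p(\cdot)}$.
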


\begin{proof}[Proof of Theorem \ref{Thm5.4}] In view of Theorem \ref{Thm 5.3}, we have $\mathcal{D}_{p(\cdot)}\sim
\mathcal{Q}_{p(\cdot)}.$ Following from Theorems \ref{Thm5.1} and \ref{Thm5.2}, we obtain that
$\mathcal{Q}_{p(\cdot)}\hookrightarrow H^*_{p(\cdot)},
\mathcal{D}_{p(\cdot)}\hookrightarrow H^s_{p(\cdot)}.$
By regularity, it is easy to see that
$S_n(f)\leq R s_n(f)$ and $S(f)\leq R s(f),$ then
$H^s_{p(\cdot)}\hookrightarrow H^S_{p(\cdot)}.$
We still need to prove that $H^S_{p(\cdot)}\hookrightarrow
\mathcal{Q}_{p(\cdot)}$ and $H^*_{p(\cdot)}\hookrightarrow
\mathcal{D}_{p(\cdot)}.$ The proofs of $H^S_{p(\cdot)}\hookrightarrow
\mathcal{Q}_{p(\cdot)}$ and $H^*_{p(\cdot)}\hookrightarrow
\mathcal{D}_{p(\cdot)}$ are similar and we only prove $H^S_{p(\cdot)}\hookrightarrow
\mathcal{Q}_{p(\cdot)}.$

To prove $H^S_{p(\cdot)}\hookrightarrow
\mathcal{Q}_{p(\cdot)}.$ Let
$\|S(f)\|_{p(\cdot)}=1.$ By regularity, we have
$$S_n(f)\leq S_{n-1}(f)+|df_n|\leq S_{n-1}(f)+ RE_{n-1}|df_n| \leq S_{n-1}(f)+ RE_{n-1}S_n(f).$$
Then $(S_n(f))_{n\geq0}$ has a predictable control and
\begin{equation}\label{eq6.1}\|f\|_{\mathcal{Q}_{p(\cdot)}}\lesssim \|S(f)\|_{p(\cdot)}+
R\|\sup_{n\geq0}E_{n-1}S_n(f)\|_{p(\cdot)}.
\end{equation}
Let $1\leq p^-\leq p^+<\infty.$
Following from the proof of \cite[Theorem 4.5]{Liu1}, we have
\begin{equation}\label{eq6.1a}\|(\sup_{n\geq0}E_{n-1}S_n(f)) \|_{p(\cdot)}\lesssim 1.\end{equation}
To finish the proof of the theorem, we prove the following lemma
which might be useful in some other circumstances.

\begin{lemma}\label{lemma6.5}Let $1\leq p^-\leq p^+<\infty$
and $q$ be a real number with $0<q<1.$ Then $$\|(\sup_{n\geq0}E_{n-1}S_n(f))^q \|_{p(\cdot)}\lesssim 1.$$
\end{lemma}

\begin{proof}[Proof of Lemma \ref{lemma6.5}] Since
$$E_{n-1}S_n(f) = S_{n-1}(f) + E_{n-1}(S_n(f)-S_{n-1}(f)),$$
we have
$$(\sup_{n\geq0}E_{n-1}S_n(f))^q \leq S(f)^q
+ (\sum_{n\geq0}E_{n-1}(S_n(f)-S_{n-1}(f)))^q.$$
In view of convexity lemma for variable exponent
martingales (see \cite[Lemma 2.3]{Liu1}), we have
$$\|\sum_{n\geq0}E_{n-1}(S_n(f)-S_{n-1}(f))\|_{p(\cdot)}\lesssim \|S(f)\|_{p(\cdot)}=1.$$
It follows from Lemma \ref{Lem1.1} that
\begin{eqnarray*}
\rho_{p(\cdot)}((\sum_{n\geq0}E_{n-1}(S_n(f)-S_{n-1}(f)))^q)
&=&E(\sum_{n\geq0}E_{n-1}(S_n(f)-S_{n-1}(f)))^{qp}\\
&\leq& (E(\sum_{n\geq0}E_{n-1}(S_n(f)-S_{n-1}(f)))^p)^q\\
&=& \rho^q_{p(\cdot)}(\sum_{n\geq0}E_{n-1}(S_n(f)-S_{n-1}(f)))\lesssim 1.
\end{eqnarray*}
Using again Lemma \ref{Lem1.1}, we have
$$\|\sum_{n\geq0}E_{n-1}(S_n(f)-S_{n-1}(f)))^q\|_{p(\cdot)} \lesssim 1.$$
Thus
\begin{eqnarray*}
\|(\sup_{n\geq0}E_{n-1}S_n(f))^q \|_{p(\cdot)}&\lesssim&
(\|S(f)^q\|_{p(\cdot)}+ 1) \\
&=& (\|S(f)\|^q_{qp(\cdot)}+ 1)
\lesssim 1. \end{eqnarray*}
\end{proof}

Let $0<p^-<1.$ We replace $q$ and $p$ by
$p^-$ and $p/p^-$ in Lemma \ref{lemma6.5} respectively. It follows that
$$\|\sup_{n\geq0}E_{n-1}S_n(f)\|^{p-}_{p(\cdot)}
=\|(\sup_{n\geq0}E_{n-1}S_n(f))^{p-}\|_{p(\cdot)/p^-} \lesssim 1.$$
Combining this with \eqref{eq6.1a} and using \eqref{eq6.1}, we have
$$\|f\|_{\mathcal{Q}_{p(\cdot)}}\lesssim 1,$$
where $0< p^-\leq p^+<\infty.$ Thus $H^S_{p(\cdot)}\hookrightarrow
\mathcal{Q}_{p(\cdot)}.$ This finishes the proof of the theorem.
\end{proof}

%
%

\end{document}